\def\NAT@def@citea{\def\@citea{\NAT@separator}}
\theoremstyle{plain}
\newtheorem{theorem}{Theorem}[section]
\newtheorem{lemma}[theorem]{Lemma}
\newtheorem{corollary}[theorem]{Corollary}
\crefname{enumi}{\unskip}{\unskip}
\theoremstyle{definition}
\newtheorem{remark}[theorem]{Remark}
\begin{document}

\title[Expressing Matrices into products of commutators]{Expressing Matrices into products of commutators of involutions, skew-involutions, finite order and skew finite order matrices}

\author{Ivan Gargate}
\address{UTFPR, Campus Pato Branco, Rua Via do Conhecimento km 01, 85503-390 Pato Branco, PR, Brazil}
\email{ivangargate@utfpr.edu.br}

\author{Michael Gargate}
\address{UTFPR, Campus Pato Branco, Rua Via do Conhecimento km 01, 85503-390 Pato Branco, PR, Brazil}
\email{michaelgargate@utfpr.edu.br}

\begin{abstract}
Let $R$ be an associative ring with unity $1$ and consider that $2,k$ and $2k\in \mathbb{N}$ are invertible in $R$. For $m\geq 1$ denote by $UT_n(m,R)$ and $UT_{\infty}(m,R)$, the subgroups of $UT_n(R)$ and $UT_{\infty}(R)$ respectively, which have zero entries on the first $m-1$ super diagonals. We show that every element on the groups $UT_n(m,R)$ and  $UT_{\infty}(m,R)$ can be expressed as a product of two commutators of involutions and also, can be expressed as a product of two commutators of skew-involutions and involutions in $UT_{\infty}(m,R)$. Similarly, denote by $UT^{(s)}_{\infty}(R)$  the group of upper triangular infinite matrices whose diagonal entries are $s$th roots of $1$. We show that every element of the groups $UT_n(\infty,R)$ and $UT_{\infty}(m,R)$ can be expressed as a product of $4k-6$ commutators all depending of powers of elements in $UT^{(k)}_{\infty}(m,R)$ of order $k$ and, also, can be expressed as a product of $8k-6$ commutators of skew finite matrices of order $2k$ and matrices of order $2k$ in $UT^{(2k)}_{\infty}(m,R)$. If $R$ is the complex field or the real number field we prove that, in $SL_n(R)$ and in the subgroup $SL_{VK}(\infty,R)$ of
the Vershik-Kerov group over $R$, each element in these groups can be decomposed into a product of  commutators of elements as described above.
\end{abstract}


\keywords{Upper triangular matrices; finite order; commutators }

\maketitle

\section{Introduction}\label{intro}
It is a classical question whether the elements of a ring or a group can be expressed as sums or products of elements of some particular set. For example, expressing matrices as a product of involutions was studied by several authors \cite{Slowik-1, Djocovi,Halmos, Halmos2}. In case of product of comutators  we can see \cite{Zheng, Hou,Paras}. Also in \cite{Paras}  the author shows the necessary and sufficient condition for a matrix over a field to be the product of an involution  and a skew-involution.

Bier and Waldemar in \cite{Bier} studied the commutators of elements of the group $UT_{\infty}(m,R)$ of infinite unitriangular matrices over an associative ring $R$ with unity $1$ containing exactly all these matrices,
which have zero entries on the first $m-1$ superdiagonals. They prove that every unitriangular matrix of a specified form is a commutator of two other unitriangular matrices. Considering $\omega$ a kth  root of unity  in $R$, recently Gargate in \cite{Gargate} prove that every element of the
group $UT_{\infty}(R)$ can be expressed as a product of $4k-6$ commutators all depending of powers of elements in $UT_{\infty}^{(k)}(R)$ of order $k$. 

In the section 3 following the same direction,  we study the subgroup $UT_n( m,R)$ and $UT_{\infty}(m,R)$ of $UT_n(R)$ and $UT_{\infty}(m,R)$ respectively which have zero entries on the first $m-1$ superdiagonals.

The main result of this paper is stated as follows:

\begin{theorem}\label{th1}
 Let $R$ be an associative ring with unity $1$ and suppose that $2$,$k$ and $2k$ are  invertible in $R$. Then every matrix in $UT_{\infty}(m,R)$ and $UT_n(m,R)$ can be expressed as a product of at most:
 \begin{itemize}
     \item [1.)] Two commutators of involutions in $\pm UT_{\infty}^{(2)}(m,R)$.
     \item[2.)] Two commutators of  skew-involutions and  involutions both in $\pm UT_{\infty}^{(2)}(m,R)$.
     \item[3.)] $4k-6$ commutators of matrices of order $k$ in $UT_{\infty}^{(k)}(m,R)$.
     \item[4.)] $8k-6$ commutators of matrices of skew order $2k$ and matrices of order finite $2k$ in $\pm UT_{\infty}^{(2k)}(m,R)$. 
 \end{itemize}
 
 \end{theorem}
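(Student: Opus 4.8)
The plan is to recognize all four items as specializations of a single statement: inside $UT_\infty(m,R)$ and $UT_n(m,R)$, every element is a product of $4\ell-6$ commutators of order-$\ell$ elements of $UT^{(\ell)}_\infty(m,R)$, where in the skew variants one additionally allows the factors to be skew elements and takes $\ell=2k$ even. Item (3) is the plain case $\ell=k$; item (1) is the plain case $\ell=2$, where an order-$2$ element of $UT^{(2)}_\infty(m,R)$ is exactly an involution with diagonal in $\{\pm1\}$ and $4\cdot2-6=2$; item (4) is the skew case $\ell=2k$, with $4\cdot(2k)-6=8k-6$; and item (2) is the skew case $\ell=2$, again with count $4\cdot2-6=2$. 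This is in turn the analogue for the restricted subgroup of Gargate's theorem that every element of $UT_\infty(R)$ is a product of $4k-6$ commutators of order-$k$ elements of $UT^{(k)}_\infty(R)$. So I would isolate three pieces of work: (A) transferring the commutator construction from $UT_\infty(R)$ to $UT_\infty(m,R)$; (B) truncating it to the finite group $UT_n(m,R)$; and (C) the skew variants.

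For (A), I would write a target $g\in UT_\infty(m,R)$ as $g=I+N$ with $N$ supported on the superdiagonals $j-i\ge m$, and solve the commutator equations layer by layer along the superdiagonals $m,m+1,m+2,\dots$. The structural fact that makes this possible is that if $X=I+A$ and $Y=I+B$ lie in $UT_\infty(m,R)$, then $[X,Y]-I$ is supported on superdiagonals $\ge 2m$; thus the lower central series of $UT_\infty(m,R)$ is the one of $UT_\infty(R)$ ``stretched'' by the factor $m$, and the telescoping recursion underlying Gargate's argument runs with each layer an affine equation over $R$ that is solvable because $\ell$ is invertible. The extra demand, absent in the unrestricted setting, is that every constructed commutator factor must again lie in $UT^{(\ell)}_\infty(m,R)$, that is, have zero first $m-1$ superdiagonals and genuine order $\ell$. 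For the order condition I would use that, when $\ell$ is invertible and $R$ contains a primitive $\ell$-th root of unity $\omega$, an element of $UT^{(\ell)}_\infty$ has order $\ell$ precisely when it is $UT_\infty$-conjugate to a diagonal matrix of $\ell$-th roots of unity (separability of $x^\ell-1$); the conjugating unitriangular factor can then be arranged to respect the superdiagonal-$\ge m$ support.

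The involution case (1) admits a transparent description that I would include as a guide: since $2$ is invertible, the unipotent group $UT_\infty(m,R)$ is uniquely $2$-divisible, so $g=h^2$ for some $h$ in the group, and for involutions $A,B$ one has $[A,B]=ABAB=(AB)^2$; realizing $g$ through involution-commutators thus reduces to writing square roots as products of involutions of $\pm UT^{(2)}_\infty(m,R)$, which is what caps the number of commutators at two. For the skew items (2) and (4), a skew factor requires $-1$ to appear among the $\ell$-th roots of unity, which forces $\ell$ to be even --- whence $\ell=2k$ --- and uses that $2k$ is invertible; allowing such factors enriches the admissible set without changing the count $4\ell-6$, giving $2$ and $8k-6$ respectively. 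Finally, for (B) I would truncate every matrix to its top-left $n\times n$ block: because products of level-$\ge m$ elements climb to superdiagonals $\ge 2m,\ge 3m,\dots$, in dimension $n$ the recursion terminates after on the order of $(n-1)/m$ steps, every truncated factor stays in $UT_n(m,R)$, and truncation leaves the orders unchanged.

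The main obstacle is step (A), precisely the tension between the two constraints imposed on each commutator factor. Forcing a factor to have exactly order $\ell$ rigidifies it, since it must be a unitriangular conjugate of a root-of-unity diagonal, whereas forcing it into $UT_\infty(m,R)$ removes exactly the low superdiagonals one would otherwise use to absorb the corrections thrown up by the layer-by-layer solution. Reconciling the two, so that the recursion both closes and keeps each factor simultaneously of order $\ell$ and of the restricted support, is where the hypotheses that $2$, $k$ and $2k$ be invertible are genuinely consumed, and it is the step I expect to demand the most care; a secondary difficulty is to check that permitting skew factors really preserves the bound $4\ell-6$ rather than breaking the order condition.
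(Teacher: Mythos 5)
There is a genuine gap, and it sits exactly where you predicted: your step (A) is not only unproved, the structural fact you invoke to make it plausible points in the wrong direction. If $X,Y\in UT_\infty(m,R)$ have unit diagonal, then indeed $[X,Y]-I$ is supported on superdiagonals $\geq 2m$ --- but this shows that commutators of such elements can \emph{never} produce the $m$-th superdiagonal of the target, so a layer-by-layer recursion with factors of this shape cannot close at the very first layer. The actual mechanism in the paper is the opposite: the commutator factors have non-unit diagonal. It constructs explicit matrices $B,C\in\pm UT^{(\ell)}_\infty(m,R)$ whose diagonals consist of alternating $m\times m$ blocks of roots of unity (blocks of $1$ and $\omega$, or $-i$ and $i\omega$, etc.) and whose $m$-th superdiagonals carry the prescribed entries scaled by $\frac{1}{2}$, $\frac{1}{k}$ or $\frac{1}{2k}$ --- this explicit scaling is where invertibility of $2$, $k$, $2k$ is consumed, not in solving affine layer equations. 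The order-$\ell$ property of $B$ and $C$ is read off directly from this block form; your proposed criterion that an element of $UT^{(\ell)}_\infty$ has order $\ell$ iff it is unitriangularly conjugate to a diagonal matrix of $\ell$-th roots of unity is unavailable over a general associative ring $R$ (no semisimplicity, no Jordan theory) and is not needed. What replaces your recursion are two conjugacy lemmas for which you have no substitute: (i) the constructed commutator $[B,C]$ (resp. $\pm(BC)^\ell$) is $m$-coherent, and any two $m$-coherent matrices with the same $m$-th superdiagonal are conjugate in $UT_\infty(m,R)$, proved by solving $AX=XB$ for $X=\sum_k X_k J^k$ by an explicit recursion on the diagonal coefficients $X_k$; (ii) all matrices in $UT_\infty(m,R)$ whose $m$-th superdiagonal is identically $1$ are mutually conjugate.

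You also never account for the factor $2$ common to all four counts. The paper splits an arbitrary $A\in UT_\infty(m,R)$ as $A=A_1A_2$, where $A_1=I_\infty+\sum_i(a_{i,m+i}-1)E_{i,m+i}$ carries the $m$-th superdiagonal and $A_2=A_1^{-1}A$ has that superdiagonal identically $1$; by the lemmas above each piece is conjugate to a single commutator (items 1 and 2), or --- via the identity from Gargate's Lemma 3.6 writing $(BC)^k$ as a product of $2k-3$ commutators of powers of $B,C$, and $-(BC)^{2k}$ as $4k-3$ in the skew case --- to a product of $2k-3$ resp.\ $4k-3$ commutators, giving $2$, $2$, $2(2k-3)=4k-6$ and $2(4k-3)=8k-6$. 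Your unified ``$4\ell-6$'' reading is numerically consistent with this but carries no argument; in particular, your unique $2$-divisibility remark for item (1) reduces the problem to writing a square root $h$ of $A$ as a product of two involutions in $\pm UT^{(2)}_\infty(m,R)$, which you do not establish and which is precisely what the two-piece splitting is designed to avoid. One part of your plan is sound and genuinely simpler than the paper's: deducing the $UT_n(m,R)$ case by truncation to the top-left $n\times n$ corner, which is a group homomorphism on upper triangular matrices preserving orders, skew-orders and commutators, whereas the paper reproves the finite case by a direct conjugation lemma. But that does not repair the missing infinite-dimensional construction, which is the theorem's core.
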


 Paras and Salinasan in \cite{Paras} shows the necessary and sufficient condition for a matrix $A$ over a field to be the product of an involution and a skew-involution. In the section 4,  the authors  give a necessary and sufficient condition for every matrix in $SL_n(\mathbb{\mathbb{C}})$  to be written as product of commutators of one involution and one skew-involution. Also, we give conditions for what every matrix in $SL_n(\mathbb{\mathbb{C}})$ to be written as products of commutators into matrices of order finite and skew-order finite.
Here we have the following results:

 \begin{theorem}\label{thequivalent} 
Let $A\in SL_n(\mathbb{\mathbb{C}})$. Then $A$ is product of commutators of one involution and one skew-involution if and only if there is a matrix $B\in SL_n(\mathbb{\mathbb{C}})$ such that $A=-B^2$ and $B$ is similar to $-B^{-1}$.
\end{theorem}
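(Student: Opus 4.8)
The plan is to reduce both implications to the Paras--Salinasan criterion of \cite{Paras} for a matrix to be a product of an involution and a skew-involution, the bridge being a single algebraic identity. I would fix notation first: let $M$ denote an involution, so that $M^{2}=I$ and $M^{-1}=M$, and let $N$ denote a skew-involution, so that $N^{2}=-I$ and $N^{-1}=-N$. The first computation is
\[
[M,N]=MNM^{-1}N^{-1}=MNM(-N)=-(MN)^{2},
\]
so that, writing $B:=MN$, every commutator of an involution and a skew-involution is exactly of the form $-B^{2}$. The same two relations also encode the symmetry of $B$: since $MBM^{-1}=M(MN)M=NM$ whereas $B^{-1}=(MN)^{-1}=(-N)M=-NM$, one obtains $MBM^{-1}=-B^{-1}$, that is, $B$ is similar to $-B^{-1}$, the conjugating matrix being the involution $M$ itself.

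For the necessity half I would simply start from $A=[M,N]$ and read off $B=MN$ together with $A=-B^{2}$ and $B\sim-B^{-1}$ directly from the identities above. For sufficiency I would invoke \cite{Paras} in the other direction: the hypothesis that $B$ is similar to $-B^{-1}$ is precisely their necessary and sufficient condition for $B$ to admit a factorization $B=MN$ with $M$ an involution and $N$ a skew-involution. Once such a factorization is in hand, the key identity returns $[M,N]=-(MN)^{2}=-B^{2}=A$, which exhibits $A$ as the desired commutator (and hence, a fortiori, as a product of such commutators). In this way the whole statement becomes the image of the Paras--Salinasan theorem under the substitution $B\leftrightarrow -B^{2}$.

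The step I expect to require the most care is the determinant bookkeeping needed to keep everything inside $SL_n(\mathbb{C})$. Taking determinants in $A=-B^{2}$ gives $(\det B)^{2}=(-1)^{n}$, so a representative $B\in SL_n(\mathbb{C})$ can exist only when $n$ is even; this is consistent with the fact that a skew-involution lies in $SL_n(\mathbb{C})$ only for even $n$. In the sufficiency direction I would therefore have to upgrade the Paras--Salinasan factorization, which a priori produces $M,N\in GL_n(\mathbb{C})$, to one whose factors (equivalently whose product $B$) have determinant $1$; note that for the commutator $A=[M,N]$ itself this is automatic, since every commutator already lies in $SL_n$. Handling this normalization---and checking that both the similarity $B\sim -B^{-1}$ and the factorization survive it---is the only genuinely delicate point, the algebraic core being the one-line identity $[M,N]=-(MN)^{2}$.
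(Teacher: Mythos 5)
Your core argument is the paper's own proof, nearly verbatim: necessity via $B=MN$, the identity $[M,N]=-(MN)^2$, and the conjugation $MBM^{-1}=NM=-B^{-1}$; sufficiency via Theorem 5 of \cite{Paras} applied to $B$, followed by the same identity to recover $A=-B^2=[M,N]$. On that level the proposal matches the published argument line for line.

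Your closing paragraph on determinant bookkeeping, however, locates the ``delicate point'' on the wrong side. In the sufficiency direction no upgrading of the Paras--Salinasan factorization is needed: the conclusion of the theorem imposes no determinant condition on the involution $M$ and the skew-involution $N$ (indeed a skew-involution cannot lie in $SL_n(\mathbb{C})$ when $n$ is odd), so the $GL_n(\mathbb{C})$ factorization $B=MN$ is used as-is --- exactly as the paper does --- and $\det(MN)=\det B=1$ holds automatically because $B\in SL_n(\mathbb{C})$ is the hypothesis. The genuine issue sits in the necessity direction, where the theorem demands $B\in SL_n(\mathbb{C})$ but your construction (and the paper's, which sets $B=ST$ without comment) only produces $B=MN$ with $(\det B)^2=(-1)^n$. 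For odd $n$ this is fatal: $A=I_n=[I_n,\,iI_n]$ is a commutator of an involution and a skew-involution, yet no $B\in SL_n(\mathbb{C})$ satisfies $B^2=-I_n$ since that forces $(\det B)^2=-1$. Even for even $n$ there are obstructions: $B=\mathrm{diag}(2,-1/2)$ satisfies $B\sim -B^{-1}$, so by \cite{Paras} it factors as $MN$ and $A=-B^2=\mathrm{diag}(-4,-1/4)\in SL_2(\mathbb{C})$ is such a commutator; but any $B'$ with $B'^2=-A=\mathrm{diag}(4,1/4)$ must be diagonal, and the two square roots similar to the negative of their inverse both have determinant $-1$, so no admissible $B'\in SL_2(\mathbb{C})$ exists. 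In short: your instinct that the determinant is the weak spot is correct, but it threatens the necessity half (and in fact the formulation of the statement itself, which the paper's proof establishes only with $B\in GL_n(\mathbb{C})$), not the sufficiency half you proposed to repair.
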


And in the general case:

\begin{theorem}\label{casek}
Let $A\in SL_n(\mathbb{C})$. If there is a matrix $B\in SL_n(\mathbb{C})$ such that $A=B^k$ and $B$ is the product of two matrices of order $k$ then  $A$ is product of $2k-3$ commutators of  elements of order $k$.  
\end{theorem}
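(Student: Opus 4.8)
The plan is to reduce the statement to a purely group-theoretic manipulation of $(PQ)^k$ and then run an induction that manufactures the commutators two at a time. Write the hypothesis as $B=PQ$ with $P^k=Q^k=I$, so that $A=B^k=(PQ)^k$. Since the conclusion only asserts that this particular element is a product of commutators of order-$k$ elements, it suffices to rewrite $(PQ)^k$ inside $SL_n(\mathbb{C})$; the field will enter only at the last step.

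First I would establish the telescoping identity
\[
(PQ)^k=\prod_{j=0}^{k-1}Q^jPQ^{-j}=c_0c_1\cdots c_{k-1},\qquad c_j:=Q^jPQ^{-j},
\]
which follows by collapsing the intermediate powers of $Q$ (using $b^{-j}b^{j+1}=b$ between consecutive factors) together with $Q^k=I$. Every $c_j$ is conjugate to $P$ and hence has order $k$, so $A$ is now displayed as a product of $k$ elements of order $k$, and the remaining task is to trade this product of conjugates for a short product of commutators.

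The engine of the induction is a recursion. Set $\Pi_m=c_0\cdots c_{m-1}$ and $W_m=P^{-m}\Pi_m$; an abelianization check shows each $W_m$ lies in the commutator subgroup, and $W_k=\Pi_k=A$ because $P^k=I$. Peeling off the last factor and conjugating gives
\[
W_m=\bigl(P^{-1}W_{m-1}P\bigr)\,[P^{-1},Q^{\,m-1}],
\]
since $P^{-1}c_{m-1}=P^{-1}Q^{m-1}PQ^{-(m-1)}=[P^{-1},Q^{\,m-1}]$. Conjugation by $P^{-1}$ carries each commutator of $W_{m-1}$ to another commutator of order-$k$ elements without changing their number, so the only genuinely new cost at step $m$ is the single factor $[P^{-1},Q^{\,m-1}]$, and $W_2=[P^{-1},Q]$ starts the count with exactly one commutator of order-$k$ elements.

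The main obstacle is that $Q^{m-1}$ need not itself have order $k$, so $[P^{-1},Q^{\,m-1}]$ is not literally a commutator of two order-$k$ matrices. Here I would invoke the complex structure: write $Q^{m-1}=R_{m-1}S_{m-1}$ as a product of two order-$k$ matrices in $SL_n(\mathbb{C})$ and split
\[
[P^{-1},R_{m-1}S_{m-1}]=[P^{-1},R_{m-1}]\cdot R_{m-1}[P^{-1},S_{m-1}]R_{m-1}^{-1},
\]
which expresses it as at most two commutators of order-$k$ elements (degenerate cases, such as $Q^{m-1}$ scalar, cost nothing since scalars are central). Counting, step $m=2$ contributes $1$ and each of the $k-2$ later steps contributes at most $2$, giving the bound $1+2(k-2)=2k-3$. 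The delicate point, and the reason the hypotheses $A=B^k$ and $SL_n(\mathbb{C})$ are needed, is verifying that the powers $Q^{m-1}$ admit such factorizations over $\mathbb{C}$ with each factor of order exactly $k$; this is a matter of matching eigenvalues (roots of unity) and determinants, and it is where I expect the real work to lie.
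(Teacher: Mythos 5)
Your proof is correct in its essential mechanism, but it takes a genuinely different route from the paper. The paper's proof of Theorem~\ref{casek} is a one-line reduction: write $A=B^k=(CD)^k$ with $C^k=D^k=I$ and cite Lemma~3.6 of \cite{Gargate}, which already packages an identity (in the spirit of \cite{Grunenfelder}) expressing $(CD)^k$ as a product of $2k-3$ commutators in powers of $C$ and $D$; this keeps the count uniform with the $2k-3$ and $4k-3$ bounds used elsewhere in the paper. Your telescoping identity $(PQ)^k=\prod_{j=0}^{k-1}Q^jPQ^{-j}$ and the recursion $W_m=\bigl(P^{-1}W_{m-1}P\bigr)[P^{-1},Q^{m-1}]$ with $W_2=[P^{-1},Q]$ are a self-contained substitute for that citation, and both check out (note your bracket matches the paper's convention $[\alpha,\beta]=\alpha\beta\alpha^{-1}\beta^{-1}$). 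The one substantive comment is that your final splitting step is unnecessary, and it is the only place where you leave something unproven. In this paper ``order $k$'' means $X^k=I$ (Preliminaries: ``a finite order $k$ if $A^k=I$''), not exact multiplicative order; under that convention each factor $[P^{-1},Q^{m-1}]$ is already a commutator of order-$k$ elements, since $(P^{-1})^k=I$ and $(Q^{m-1})^k=(Q^k)^{m-1}=I$, and conjugation by powers of $P^{-1}$ preserves this property. Hence your recursion alone proves the theorem with the sharper count of $k-1\le 2k-3$ commutators, and the unproven claim that each $Q^{m-1}$ factors in $SL_n(\mathbb{C})$ as a product of two matrices of exact order $k$ can simply be deleted rather than established. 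Indeed, the exact-order reading would be untenable even at your base case: the hypothesis only gives $Q^k=I$, so $Q$ itself, and likewise $P^{-1}$, need not have exact order $k$, and your accounting $1+2(k-2)=2k-3$ would rest on a condition the theorem's own hypotheses do not satisfy. With the superfluous step removed, your argument is complete, more elementary than the paper's appeal to \cite{Gargate}, and strictly improves the stated bound.
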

Also,
\begin{theorem}\label{thskeworderk}
Let $A\in SL_n(\mathbb{\mathbb{C}})$. If there is a matrix $B\in SL_n(\mathbb{\mathbb{C}})$ such that $A=-B^{2k}$ and $B$ is the product of one skew order $2k$ matrix and one matrix of order $2k$ then  $A$ is product of $4k-3$ commutators of these elements. 
\end{theorem}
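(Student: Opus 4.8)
The plan is to reduce Theorem~\ref{thskeworderk} to the already-established Theorem~\ref{casek} by a doubling trick, exactly parallel to how the skew-order statements in Theorem~\ref{th1} mirror the ordinary-order ones. Suppose $A = -B^{2k}$ where $B = ST$ with $S$ of skew order $2k$ (meaning $S^{2k} = -I$, so $S$ has order $4k$ but $S^{2k}$ is the scalar $-I$) and $T$ of order $2k$. The natural first move is to observe that $A = -B^{2k} = S^{2k} B^{2k}$, which invites writing $A$ as a power of a single matrix. First I would set $C = SB$ (or a suitable conjugate/product built from $S$ and $B$) and verify that $C$ is a matrix of order $2k$, so that $A$ becomes expressible as $C^{2k}$ up to a controlled factor; the scalar $-I = S^{2k}$ is precisely what converts the skew-order data into honest order-$2k$ data.

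Once $A$ (or a conjugate of $A$) is exhibited as $D^{2k}$ with $D$ a product of two matrices of order $2k$, I would apply Theorem~\ref{casek} directly with $k$ replaced by $2k$. That theorem yields a decomposition of $A$ as a product of $2(2k)-3 = 4k-3$ commutators of elements of order $2k$. The target count in Theorem~\ref{thskeworderk} is exactly $4k-3$, so no additional commutators should be spent; the whole content is in arranging the factorization $A = D^{2k}$ so that Theorem~\ref{casek}'s hypothesis ``$D$ is the product of two matrices of order $2k$'' is met, with one of those two factors carrying the skew-order information from $S$.

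The key steps, in order, are: (i) from $A = -B^{2k}$ and $B = ST$, rewrite $-I$ as $S^{2k}$ and absorb it, producing an identity of the form $A = (\text{something built from } S,T)^{2k}$; (ii) check that the inner matrix is genuinely a product of two order-$2k$ elements, tracking which factor inherits skew order $2k$ and which inherits order $2k$; (iii) invoke Theorem~\ref{casek} with parameter $2k$ to obtain $2(2k)-3 = 4k-3$ commutators; and (iv) confirm that conjugation (if $A$ is only shown to be \emph{similar} to $D^{2k}$) does not change the commutator count, since a commutator of order-$2k$ elements conjugates to a commutator of order-$2k$ elements.

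The main obstacle will be step (i)--(ii): making the scalar $-I$ and the two factors $S,T$ combine cleanly so that the resulting base matrix is simultaneously a $2k$-th root of $A$ and a product of exactly two finite-order matrices of the correct orders. The subtlety is that $S$ and $T$ need not commute, so $(ST)^{2k}$ is not simply $S^{2k}T^{2k}$; I expect the correct maneuver is to pass to a conjugate $B' = T^{-1}BT = T^{-1}ST\cdot T$ or to use the identity $-B^{2k} = S^{2k}(ST)^{2k}$ only after exhibiting $S$ and $ST$ (rather than $S$ and $T$) as the two order-$2k$ factors, so that the skew scalar $S^{2k}=-I$ cancels the minus sign and leaves a clean $2k$-th power. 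Getting this algebraic bookkeeping right, so that Theorem~\ref{casek} applies verbatim, is the crux.
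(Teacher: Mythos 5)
Your overall plan (reduce to Theorem~\ref{casek} with parameter $2k$, so that $2(2k)-3=4k-3$ commutators come out) is sound in outline, but the step you yourself flag as the crux --- producing a matrix $D$ with $D^{2k}=A$ and $D$ a product of two matrices of order $2k$ --- is left open, and both concrete maneuvers you float would fail. Writing $B=ST$ with $S^{2k}=-I$ and $T^{2k}=I$: the matrix $S$ is \emph{not} of order $2k$ (its order is $4k$), $ST=B$ is not of finite order at all in general (its $2k$-th power is $-A$ with $A$ arbitrary in $SL_n(\mathbb{C})$), and $C=SB=S^2T$ has no reason to satisfy $C^{2k}=I$ or $C^{2k}=A$; likewise $A=S^{2k}(ST)^{2k}$ is a correct identity (the scalar $S^{2k}=-I$ is central) but is not a $2k$-th power of anything you have exhibited. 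The missing idea that completes your reduction over $\mathbb{C}$ is scalar scaling: pick $\zeta\in\mathbb{C}$ with $\zeta^{2k}=-1$ and set $D=\zeta B=(\zeta S)T$. Then $(\zeta S)^{2k}=\zeta^{2k}S^{2k}=(-1)(-I)=I$, so $\zeta S$ has order dividing $2k$, while $T$ has order $2k$, and $D^{2k}=\zeta^{2k}B^{2k}=-B^{2k}=A$. Theorem~\ref{casek} at parameter $2k$ then yields $4k-3$ commutators, and since scalars cancel inside commutators ($[\zeta^aS^a,T^b]=[S^a,T^b]$) these are genuinely commutators built from powers of $S$ and $T$, as the statement requires. (A minor caveat: $\det(\zeta B)=\zeta^n$ need not equal $1$, so the hypothesis $B\in SL_n(\mathbb{C})$ of Theorem~\ref{casek} is not literally met; this is harmless because the proof of Theorem~\ref{casek} is a commutator identity that never uses the determinant, but you would need to say so.)

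For comparison, the paper performs no reduction at all: it cites Lemma 3.6 of \cite{Gargate} directly in its signed form. As that lemma is used in item (4) of the paper's Lemma in Section 3, it states that if $S^{2k}=-I$ and $T^{2k}=I$ then $(ST)^{2k}=-F_{2k}(S,T)$, where $F_{2k}(S,T)$ is a product of $4k-3$ commutators whose factors are powers of $S$ and $T$; hence $A=-B^{2k}=-(ST)^{2k}=F_{2k}(S,T)$ immediately. Your route, once repaired with the $\zeta$-trick, is a legitimate alternative over $\mathbb{C}$ (though, unlike the identity from \cite{Gargate}, it would not transfer to rings lacking a $2k$-th root of $-1$); as written, however, the proposal stops exactly where the proof has to start.
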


Considering the results obtained in Gargate \cite{Gargate}, we have the following 

 \begin{theorem}\label{th4}
All element in $SL_n(\mathbb{C})$ can be written as a product of at most

\begin{itemize}
     \item [1.)] Two commutators of involutions in $\pm UT_{n}^{(2)}(\mathbb{C})$.
     \item[2.)] Two commutators of  skew-involutions and  involutions both in $\pm UT_n^{(2)}(\mathbb{C})$.
     \item[3.)] $4k-6$ commutators of matrices of order $k$ in $UT_n^{(k)}(\mathbb{C})$.
     \item[4.)] $8k-6$ commutators of matrices of skew order $2k$ and matrices of order finite $2k$ in $\pm UT_n^{(2k)}(\mathbb{C})$. 
 \end{itemize}

\end{theorem}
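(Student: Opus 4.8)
The plan is to derive all four items from Theorems \ref{casek} and \ref{thskeworderk} after two reductions. First, items (1) and (2) are the specializations $k=2$ and $k=1$ of items (3) and (4): an involution is an element of order $2$, so item (1) is item (3) at $k=2$ (indeed $4k-6=2$), and a skew-involution is an element of skew order $2$, i.e.\ $2k=2$, so item (2) is item (4) at $k=1$ (indeed $8k-6=2$). Thus it suffices to establish items (3) and (4). Second, since $4k-6=2(2k-3)$ and $8k-6=2(4k-3)$, each of these follows once we prove the factorization lemma: every $A\in SL_n(\mathbb{C})$ can be written as $A=A_1A_2$ where each $A_i=B_i^{\,k}$ with $B_i$ a product of two matrices of order $k$ (respectively $A_i=-B_i^{\,2k}$ with $B_i$ a product of one skew order $2k$ matrix and one order $2k$ matrix). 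Applying Theorem \ref{casek} (resp.\ Theorem \ref{thskeworderk}) to $A_1$ and to $A_2$ and adding the commutator counts then yields the bounds $4k-6$ and $8k-6$.

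The first ingredient is root extraction in $SL_n(\mathbb{C})$. As $\mathbb{C}$ is algebraically closed, every $A\in SL_n(\mathbb{C})$ equals $\exp(X)$ for some $X$ with $\operatorname{tr}(X)\in 2\pi i\,\mathbb{Z}$; choosing the branch so that $\operatorname{tr}(X)=0$ gives $B=\exp(X/k)\in SL_n(\mathbb{C})$ with $B^k=A$, and similarly a skew root with $-B^{2k}=A$. Hence roots of the required multiplicative type always exist inside $SL_n(\mathbb{C})$. I will use throughout that conjugation $P(\cdot)P^{-1}$ preserves being an involution, a skew-involution, or an element of (skew) order $k$, and sends commutators to commutators, so all the element-type hypotheses and all the counts are similarity invariants; this lets me argue up to similarity and, where convenient, in upper triangular form via the finite $n$ specialization of Theorem \ref{th1} and the results of Gargate \cite{Gargate}.

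The heart of the argument is realizing a suitable root $B_i$ as a product of two finite order matrices. A single root of an arbitrary $A$ need not admit such an expression, because matrices of order $k$ have root of unity eigenvalues whereas $B$ may have arbitrary eigenvalues; this is exactly why two factors $A_1A_2$ are allowed, as the product of two of the relevant conjugacy classes sweeps out enough of $SL_n(\mathbb{C})$. Concretely I would split $A$ using the multiplicative Jordan decomposition into commuting semisimple and unipotent parts and distribute them between $A_1$ and $A_2$, so that each $A_i$ is a product of a diagonalizable piece with controlled eigenvalues and a unipotent piece. For each $A_i$ one then invokes the classical theory of products of finite order matrices over an algebraically closed field (in the spirit of Gow and Sourour) to write the chosen root $B_i$ as a product of two matrices of the prescribed order; the facts that $\det=1$ and that $\mathbb{C}$ contains all roots of unity make the eigenvalue and determinant bookkeeping close up.

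The main obstacle is precisely this last step: certifying, uniformly over all eigenvalue configurations of product $1$, that the chosen roots are products of two matrices of the prescribed order, and in the skew case that the sign in $A_i=-B_i^{2k}$ and the similarity $B\sim -B^{-1}$ underlying Theorem \ref{thequivalent} can be matched simultaneously. The delicate regime is the semisimple part whose eigenvalues are far from roots of unity, where one must show that the product map on the relevant order-$k$ classes is surjective onto the needed set; controlling this surjectivity, rather than the (routine) commutator counting, is where the real work lies. Once the factorization lemma is secured, items (3) and (4) follow by applying Theorems \ref{casek} and \ref{thskeworderk} to $A_1$ and $A_2$, and items (1) and (2) follow by the specializations $k=2$ and $k=1$, completing the proof of Theorem \ref{th4}.
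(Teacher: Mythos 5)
Your plan stalls exactly at the step you yourself flag as ``where the real work lies,'' and that step is never carried out: the factorization lemma asserting that every $A\in SL_n(\mathbb{C})$ splits as $A=A_1A_2$ with each $A_i=B_i^{k}$ (resp.\ $A_i=-B_i^{2k}$) and each root $B_i$ a product of two matrices of the prescribed finite order. Appealing to ``the classical theory of products of finite order matrices in the spirit of Gow and Sourour'' is not a proof; the surjectivity of the product map on order-$k$ conjugacy classes, uniformly over all eigenvalue configurations of determinant $1$, is essentially equivalent in difficulty to the theorem itself, so as written the argument is circular at its core. The Jordan-decomposition splitting does not help here either: the semisimple factor you isolate has arbitrary eigenvalues, which is precisely the regime where you concede you cannot certify the two-factor expression of the root. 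A secondary issue: items (3) and (4) locate the commutator entries in $UT_n^{(k)}(\mathbb{C})$ and $\pm UT_n^{(2k)}(\mathbb{C})$ (triangular matrices with root-of-unity diagonal), and your general finite-order witnesses carry no triangularity, so even granting the lemma the membership claims would need a further conjugation argument.

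The paper's actual route avoids the unproven lemma entirely by splitting into two cases with concrete mechanisms. For non-scalar $A$ it invokes Sourour's factorization theorem: $A$ is similar to $LU$ with $L$ lower and $U$ upper triangular, \emph{both unipotent}, so no root extraction is needed at all --- Theorem \ref{th1} (its finite-$n$, $m=1$ instance, whose proof is by explicit construction of involutions, skew-involutions, and order-$k$ triangular matrices) applies directly to each factor. Only for scalar matrices $\alpha I$ does the paper extract roots, and there it does so explicitly: it writes $\alpha I=ST$ with $S,T$ diagonal matrices assembled from blocks $\mathrm{diag}(a,a^{-1})$, exhibits concrete $2\times2$ matrices $J_1(a),J_2(a)$ of order $k$ (from Grunenfelder--Ko\v sir--Omladi\v c--Radjavi) whose product realizes each block, and then applies Theorems \ref{casek} and \ref{thskeworderk} to get $2k-3$ (resp.\ $4k-3$) commutators per factor, hence $4k-6$ (resp.\ $8k-6$) in total. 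Your count bookkeeping $4k-6=2(2k-3)$ and the reduction of items (1)--(2) to $k=2$ and $k=1$ are consistent with the paper's arithmetic, but without either Sourour's $LU$-similarity or the explicit scalar-case constructions, the proposal has a genuine gap rather than an alternative proof.
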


Also we consider $GL_{VK}(\infty, \mathbb{C})$ the Vershik-Kerov group and we have the following result:

\begin{theorem}\label{th3} Assume that $R=\mathbb{C}$ is a complex field or the real number field. Then every element of the
group $SL_{VK}(\infty,m, \mathbb{C})$ can be expressed as a product of at most \begin{itemize}
     \item [1.)] Two commutators of involutions.
     \item[2.)] Two commutators of  skew-involutions and  involutions \item[3.)] $4k-6$ commutators of matrices of order $k$.
     \item[4.)] $8k-6$ commutators of matrices of skew order $2k$ and matrices of order finite $2k$. 
 \end{itemize} in
$GL_{VK}(\infty,m, \mathbb{C})$.
\end{theorem}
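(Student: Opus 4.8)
The plan is to reduce everything to the two results already in hand — Theorem~\ref{th1} for the infinite unitriangular group and Theorem~\ref{th4} for $SL_n(\mathbb{C})$ — by exploiting the product structure of the Vershik--Kerov group. First I would record the decomposition that underlies the whole argument: every $A\in SL_{VK}(\infty,m,\mathbb{C})$ can be written as $A=g\cdot v$, where $g$ is a finite matrix occupying the top-left $N\times N$ corner (extended by the identity) for some $N$, and $v\in UT_{\infty}(m,\mathbb{C})$. This should follow from the defining property of $GL_{VK}(\infty,m,\mathbb{C})$, namely that only finitely many entries lie below the main diagonal: a Gauss-type elimination confined to a finite corner clears them, and the determinant-one condition on $A$ together with the unitriangularity of $v$ forces $g\in SL_N(\mathbb{C})$.

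With the decomposition in hand I would treat the two factors separately and uniformly across the four items. For the finite block $g\in SL_N(\mathbb{C})$ I apply Theorem~\ref{th4}, writing $g$ as a product of at most two commutators of involutions (respectively two commutators of skew-involutions and involutions, $4k-6$ commutators of order-$k$ matrices, or $8k-6$ commutators of skew-order-$2k$ and order-$2k$ matrices), with all factors in the appropriate finite groups $\pm UT_N^{(2)}(\mathbb{C})$, $UT_N^{(k)}(\mathbb{C})$, or $\pm UT_N^{(2k)}(\mathbb{C})$. For the infinite factor $v\in UT_{\infty}(m,\mathbb{C})$ I apply Theorem~\ref{th1}, which yields the same four expressions with factors in $\pm UT_{\infty}^{(2)}(m,\mathbb{C})$, $UT_{\infty}^{(k)}(m,\mathbb{C})$, and $\pm UT_{\infty}^{(2k)}(m,\mathbb{C})$. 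Every factor produced this way embeds canonically into $GL_{VK}(\infty,m,\mathbb{C})$, so at this stage $A$ is already a product of commutators of the prescribed special type — but, read naively, of \emph{twice} the stated length.

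The decisive and, I expect, most delicate step is to collapse these two expressions into a single product of length $2$, $2$, $4k-6$, or $8k-6$ rather than double. For this I would appeal to the infinite-dimensional nature of $GL_{VK}(\infty,m,\mathbb{C})$ through a shift/absorption argument of the type used in \cite{Bier} and \cite{Gargate}: a block shift $s$ translating the coordinate indices past the corner $\{1,\dots,N\}$ can be used to relocate the support of the factors coming from $v$ onto a tail disjoint from that corner, after which the factors realizing $g$ and those realizing $v$ occupy essentially disjoint coordinate regions and can be amalgamated position-by-position. The amalgamation preserves the special type of each factor, since conjugation by any $h\in GL_{VK}(\infty,m,\mathbb{C})$ sends an element of order $j$ (or skew order $j$) to one of the same kind via $(hxh^{-1})^{j}=hx^{j}h^{-1}$, and carries commutators to commutators. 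The genuine obstacle is that $g$, unlike every factor produced by Theorem~\ref{th1}, is not unitriangular, so the reduction must be arranged so that relocating $v$ does not disturb the factorization $A=g\cdot v$; controlling this interference while keeping the count at $2$, $2$, $4k-6$, and $8k-6$ is where the real work lies. Granting this absorption, the four items follow at once from the corresponding items of Theorem~\ref{th1} and Theorem~\ref{th4}, with all commutator factors lying in $GL_{VK}(\infty,m,\mathbb{C})$ as required.
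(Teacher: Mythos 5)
Your proof stalls exactly at the step you flagged as delicate, and the obstacle is fatal to the multiplicative setup rather than just technical. You split $A=g\cdot v$ and decompose the two factors separately, hoping to halve the resulting count by a shift/absorption argument. But the disjoint-support amalgamation you invoke, $[x,y]\oplus[u,v]=[x\oplus u,\,y\oplus v]$, is an identity for \emph{direct sums}, i.e.\ for block-diagonal matrices; it says nothing about a \emph{product} $g\cdot v$ whose factors overlap. If you conjugate the commutator factors of $v$ by a shift $s$ to move their support past the corner $\{1,\dots,N\}$, you have expressed $svs^{-1}$, not $v$, as a product of commutators, and $g\cdot svs^{-1}\neq A$ in general: there is no way to relocate the support of the factors coming from $v$ without changing the element they multiply out to. So after your first two steps you are stuck with twice the advertised count, and nothing in the proposal closes that factor of two. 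There is a secondary gap as well: for $m>1$ your unitriangular factor $v=g^{-1}A$ carries the block $M_1^{-1}M_2$ in rows $1,\dots,N$ and columns $>N$, whose entries at distance less than $m$ from the diagonal need not vanish, so $v\notin UT_{\infty}(m,\mathbb{C})$; Theorem \ref{th1} with parameter $m$ then does not apply to $v$, and applying the $m=1$ case instead produces commutator factors lying outside $GL_{VK}(\infty,m,\mathbb{C})$, violating the conclusion of the theorem.

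The paper sidesteps both problems by replacing your multiplicative split with a conjugation. Using the lemma quoted from \cite{Hou} (after arranging, as in the proof of Theorem 1.3 there, that the finite corner has no eigenvalue $1$, which makes the relevant Sylvester-type equation for killing the off-diagonal block solvable since the tail is unitriangular), every $M\in SL_{VK}(\infty,m,\mathbb{C})$ is conjugate in the Vershik--Kerov group to a genuine direct sum $A\oplus T$ with $A\in SL_n(\mathbb{C})$ and $T\in UT_{\infty}(m,\mathbb{C})$. Theorem \ref{th4} applied to $A$ and Theorem \ref{th1} applied to $T$ then give two commutator expressions of the \emph{same} length ($2$, $2$, $4k-6$, $8k-6$; pad with trivial commutators if one side is shorter), and the blockwise identity above merges them with no doubling, the direct sum of two involutions (resp.\ order-$k$, skew order-$2k$ elements) being again of the same type. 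Conjugating back preserves the number and the type of the commutator factors, as noted in the Remark opening Section 3. In short: conjugate to a direct sum first and merge blockwise; do not factor multiplicatively and try to absorb afterwards.
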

 \section{Preliminaries}
 
Let $R$ be an associative ring with identity $1$. Denote by $T_n(R)$ and $T_{\infty}(R)$ the groups of $n \times n$ and infinite upper triangular matrices over a ring $R$ and denote by $UT_n(R)$ and $UT_{\infty}(R)$ the subgroups of $T_n(R)$ and $T_{\infty}(R)$, respectively, whose entries on the main diagonal are equal to unity $1$. 

For $m\geq 1$ we denote the subgroups $T_n(m,R)$ and  $T_{\infty}(m,R)$ of $T_n(R)$ and $T_{\infty}(R)$,  respectively, containing exactly all those matrices which have zero entries on the first $m-1$ super diagonals. Analogously we denote by $UT_n(m,R)$ and $UT_{\infty}(m,R)$ the subgroups of $T_n(m,R)$ and $T_{\infty}(m,R)$ respectively whose entries on the $m$th super diagonal are equal to unity $1$.

Also, define the subgroups, for any $s\in \mathbb{N}$ and $m\geq 1$:
 $$\pm UT^{(s)}_{\infty}(R)=\{g\in T_{\infty}(R), \ g_{ii}^{s}=\pm 1\},$$
 $$\pm UT^{(s)}_{\infty}(m,R)=\{g\in T_{\infty}(m,R),\ g_{ii}^s=\pm 1\}$$
$$\pm D^{(s)}_{\infty}(R)=\{g\in UT^{(s)}_{\infty}(R), \ g_{ij}=0, \ if \ i\neq j\},$$
$$\pm D^{(s)}_{\infty}(m,R)=\{g\in UT^{(s)}_{\infty}(m,R), \ g_{ij}=0, \ if \ i\neq j\}.$$

Consider $k\geq 3$, a matrix $A\in M_n(R)$ (or in $T_{\infty}(R)$) is called an
 involution if $A^2=I$, a skew-involution if $A^2=-I$, a finite order $k$ if $A^k=I$ and a skew finite order $k$ if $A^k=-I,$
where $I=I_n$ is the identity matrix in $M_n(R)$ (or $I=I_{\infty}$ in $T_{\infty}(R)$, respectively).

Denote by $E_{ij}$ the finite or infinite matrix with a unique nonzero entry equal to $1$ in the position $(i,j)$, so  $A=\sum_{1\leq i\leq j \leq n}a_{i,j}E_{ij}$ ($A=\sum_{i,j\in \mathbb{N}}a_{ij}E_{i,j}$) is the $n\times n$ (infinite $\mathbb{N}\times \mathbb{N}$) matrix with $a_{ij}$ in the position $(i,j)$. Denote by $[\alpha,\beta]=\alpha\beta\alpha^{-1}\beta^{-1}$ the commutator of two elements $\alpha$ and $\beta$ of a group $G$.

 Denote by $J_m(\infty,R)$ the set of all infinite matrices in $T_{\infty}(R)$ in which all the entries outside of the $m$th  super diagonal equal $0$. Let $A\in UT_{\infty}(m,R)$ and denote by $J_m(A)$ the matrix of $J_m(\infty,R)$ that has the same entries in the  $m$th  super diagonal as $A$. Denote by $Z$ the center of the ring $R$, and by $D_{\infty}(Z)$ the subring of all diagonal infinite matrices with entries in $Z$. 
 
 We say that $A\in UT_{\infty}(m,R)$ is $m$-coherent if there is a sequence $\{D_i\}_{i\geq 0}$ of elements of $D_{\infty}(Z)$ such that  
 $$A=\sum^{\infty}_{i=0}D_iJ_m(A)^i.$$ 
 If $D_0=D_1=I_{\infty}$ the the sequence is called normalized. If $m=1$ the matrix is called coherent.
 
 We will denote by $(*,*,*,\cdots)_m$ the $m$th super diagonal of a matrix $A \in T_{\infty}(m,R)$, for $m\geq 1$.
 
 \section{Expressing Matrix into Products of Commutators}
 In this section we assume that $2$, $k$ or $2k$ are invertible elements in $R$, this the according to each case we will study. Also we will can assume that $i\in R$. 
 
 \begin{remark}
  Let $G=M_n(R)$ or $T_{\infty}(R)$, then
  \begin{itemize}
      \item [1.)] If $\alpha \in G$ is a product of $r$ involution (or  skew-involutions and involutions, or elements of order $k$, or skew-finite order $2k$ and of order $2k$), then for every $\beta \in G$ the conjugate $\beta \alpha \beta^{-1}$ is a product of $r$ involutions (or  skew-involutions and involutions, or elements of order $k$, or skew-finite order $2k$ and of order $2k$, respectively).
      \item[2.)] If $\alpha$ is a product of $r$ commutators of involutions (or  skew-involutions and involutions, or elements of order $k$, or skew-finite order $2k$ and of order $2k$) then for every $\beta \in G$ the conjugate $\beta \alpha \beta^{-1}$ is a product of $r$ commutators of involutions (or  skew-involutions and involutions, or elements of order $k$, or skew-finite order $2k$ and of order $2k$, respectively) as well.
  \end{itemize}
 \end{remark}
 The following results we adapted from Hou \cite{Hou}.
\begin{remark}\label{remark1} If $A\in UT_{\infty}(m,R)$ is $m$-coherent then, for $r\geq 2$, $A^r$ is $m$-coherent as well.
 \end{remark}
 \begin{proof}
  If $D=diag(a_1,a_2,a_3,\cdots)\in D(\infty,Z)$ define $S(D)=diag(a_2,a_3,a_4,\cdots)\in D(\infty,Z)$ and in this case we have that
  
  $$\forall J\in J_m(\infty,R), \ JD=S^m(D)J,$$
  then
 $$\begin{array}{rcl}
     A^2 & =&\displaystyle\sum^{\infty}_{k=0}\sum^{\infty}_{i=0}D_kJ_m(A)^k D_iJ_m(A)^i \\
      & =& \displaystyle\sum^{\infty}_{k=0}\sum^{\infty}_{i=0}D_kS^{km}(D_i)J^{k+i}\\
      & =&\displaystyle\sum^{\infty}_{k=0}\left(\sum^{k}_{i=0}D_{i}S^{im}(D_{k-i})\right)J(A)^k,
 \end{array}
 $$
so, $A^2$ is $m$-coherent. For the case $k\geq 3$ follows similarly from Gargate in \cite{Gargate}.
 \end{proof}

\begin{lemma}\label{lemma1} Let $J\in J_1(\infty,R)$, then there exists a coherent matrix $A\in UT_{\infty}(R)$ such that $J(A)=J$ and $A$ 
is the commutator of one skew-involution and one involution in $\pm UT_{\infty}(R)$. 
\end{lemma}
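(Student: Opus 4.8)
The plan is to exploit the fact that a skew-involution $\alpha$ and an involution $\beta$ satisfy $\alpha^{-1}=-\alpha$ and $\beta^{-1}=\beta$, so that the commutator collapses to
\[
[\alpha,\beta]=\alpha\beta\alpha^{-1}\beta^{-1}=-(\alpha\beta)^2 .
\]
It therefore suffices to produce a skew-involution $\alpha$ and an involution $\beta$ with diagonal entries among the roots of $\pm1$ for which $A:=-(\alpha\beta)^2$ is coherent and satisfies $J(A)=J$. Note that the diagonal of any commutator of upper triangular matrices is trivial, so $A$ will automatically lie in $UT_\infty(R)$ and no separate argument is needed for unitriangularity.

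First I would fix the diagonals. Writing $J=\sum_j c_jE_{j,j+1}$, I take $\beta=E:=\operatorname{diag}((-1)^{j-1})_{j\ge1}$, which is manifestly an involution, and I prescribe the diagonal of $\alpha$ to be $\operatorname{diag}(i,-i,i,-i,\dots)$, so that $\alpha_{jj}^2=-1$ and $\alpha_{j+1,j+1}=-\alpha_{jj}$. This alternation is the decisive choice: it forces the first-superdiagonal part of the constraint $\alpha^2=-I$ to vanish identically, leaving the first-superdiagonal entries of $\alpha$ free. Writing $\alpha=D_\alpha(I+M)$ with $D_\alpha$ the alternating diagonal and $M$ strictly upper triangular, one computes $A=(I+\widetilde M)(I+EME)$ with $\widetilde M=D_\alpha^{-1}MD_\alpha$, and the first superdiagonal of $A$ turns out to equal $-2M_{\,\cdot,\cdot+1}$; hence the single choice $M_{j,j+1}=-c_j/2$, which uses invertibility of $2$, already forces $J(A)=J$.

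The technical heart is to extend $M$ to the higher superdiagonals so that $\alpha^2=-I$ holds exactly. The equation $\alpha^2=-I$ is equivalent to $M+\widetilde M+\widetilde M M=0$, and since $\widetilde M$ rescales the $(j,k)$ entry of $M$ by $(-1)^{k-j}$, reading this identity off on the $r$-th superdiagonal leaves a diagonal-endpoint contribution $(1+(-1)^r)M_{j,j+r}$. For even $r$ this equals $2M_{j,j+r}$, so one solves recursively for $M_{j,j+r}$ in terms of strictly lower superdiagonals (again dividing by $2$); for odd $r$ the unknown drops out and one must instead verify that the residual quadratic term $(\widetilde M M)_{j,j+r}$ vanishes. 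I expect this odd-superdiagonal consistency to be the main obstacle, but the cancellation is automatic once the even-superdiagonal formulas are substituted — it is already visible on the third superdiagonal — so the recursion closes with the odd entries free (take them zero). The infinitude of the index set is what lets this recursion run without any boundary obstruction.

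It then remains to confirm coherence of $A$. The recursion exhibits $I+M$ as a central-diagonal power series in $J$, that is, as a coherent matrix; conjugation by the diagonals $D_\alpha$ and $E$ multiplies $J^i$ by the central scalar $(-1)^i$ and so preserves coherence, whence both $I+\widetilde M$ and $I+EME$ are coherent with the same underlying superdiagonal. By the computation carried out in the proof of Remark \ref{remark1}, their product $A$ is coherent as well. Finally I would record that $\alpha\in\pm UT_\infty^{(2)}(R)$ (diagonal $\pm i$, $\alpha^2=-I$) and $\beta\in\pm UT_\infty^{(2)}(R)$ (diagonal $\pm1$, $\beta^2=I$), which delivers the desired decomposition $A=[\alpha,\beta]$ and completes the construction.
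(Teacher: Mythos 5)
Your proposal is correct in substance but takes a genuinely different route from the paper's. The paper \emph{splits} the superdiagonal data between the two factors: $B$ has alternating diagonal $(-i,i,-i,\dots)$ and carries only the entries $a_{23},a_{45},\dots$, while $C$ has diagonal $(1,-1,1,\dots)$ and carries $a_{12},a_{34},\dots$; because of this alternation each factor is in effect a direct sum of $2\times 2$ blocks, so $B^2=-I$ and $C^2=I$ are verified by a two-term computation with no recursion at all, and the product is supported on just three diagonals,
$BC=-iI_{\infty}-\tfrac{i}{2}\sum a_{i,i+1}E_{i,i+1}-\tfrac14\sum a_{2i,2i+1}a_{2i+1,2i+2}E_{2i,2i+2}$,
whose coherence is read off with explicit $D_0,D_1,D_2$; then $A=[B,C]=-(BC)^2$ and Remark~\ref{remark1} finish exactly as in your last paragraph. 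You instead concentrate \emph{all} the data in the skew-involution $\alpha$ and take $\beta$ purely diagonal; the price is that $\alpha^2=-I$ becomes an infinite system $M+\widetilde M+\widetilde M M=0$ (equivalently $\sigma(I+M)=(I+M)^{-1}$ for the conjugation automorphism $\sigma$) rather than a finite check. What your route buys is conceptual clarity and the stronger output that one of the two factors can be taken diagonal; what the paper's route buys is that every identity holds by inspection.

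The one thin spot is the odd-superdiagonal consistency: verifying $r=3$ and asserting that "the cancellation is automatic" is not a proof that $\sum_{s=1}^{r-1}(-1)^sU_sU_{r-s}=0$ for every odd $r$ (your pairing of $s$ with $r-s$ produces $(-1)^s(U_sU_{r-s}-U_{r-s}U_s)$, and single-superdiagonal matrices do \emph{not} commute in general, even over a commutative ring). The claim is nevertheless true, and the clean reason should be recorded: by induction your recursion yields $U_r=b_rK^r$ with $b_r\in\mathbb{Z}[1/2]$ central, where $K$ is the superdiagonal matrix with entries $-c_j/2$ (even $r$: $2U_r=-\sum_{s=1}^{r-1}(-1)^sU_sU_{r-s}$ is a central dyadic multiple of $K^r$; odd $r\geq 3$: $U_r=0$). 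Hence all the $U_s$ are power series terms in the single matrix $K$ and commute, so for odd $r$ the paired terms cancel identically: $(-1)^s(b_sb_{r-s}-b_{r-s}b_s)K^r=0$. Equivalently, $I+M=f(K)$ with $f(x)=x+\sqrt{1+x^2}$, and $f(x)f(-x)=1$ holds as a formal identity with coefficients in $\mathbb{Z}[1/2]$, consistent with only $2$ being invertible in $R$. With that one observation supplied, your argument is complete: the remaining steps --- trivial diagonal of the commutator, $J(A)=J$ via $A=(I+EME)^2$ (note $D_\alpha E=iI$, and indeed your $\widetilde M$ equals $EME$), and coherence of $A$ via the convolution computation of Remark~\ref{remark1} --- are all sound and coincide with the paper's closing steps.
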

\begin{proof}
 Suppose that
 $$J=\displaystyle \sum^{\infty}_{i=1} a_{i,i+1}E_{i,i+1},$$
and consider
$$B=\left[\begin{array}{ccccc} -i & & & & \\
  & i & - \frac{1}{2}i a_{23} & & \\ & & -i &  &  \\ & & & i & -\frac{1}{2}i a_{45} \\ & & & &  \ddots \end{array}\right] \ \ and \  \ C=\left[\begin{array}{ccccc} 1 & \frac{1}{2}a
 _{12}& & & \\
  & -1 &  & & \\ & & 1 & \frac{1}{2}a_{34} &  \\ & & & -1 &  \\ & & & &  \ddots \end{array}\right],$$
where $i$ is the imaginary number.

We can observe that $B\in \pm UT^{(2)}_{\infty}(R)$ is a skew-involution  and $C \in \pm UT^{(2)}_{\infty}(R)$ is an involution . Also,  define $A=[B,C]=BCB^{-1}C^{-1}=-(BC)^2$, is not difficult verify that $J(A)=J$. Now, observe that  $BC$ is coherent, because
 $$BC= -i I_{\infty} -\frac{i}{2}\sum^{\infty}_{i=1}a_{i,i+1}E_{i,i+1} - \frac{1}{4}\sum^{\infty}_{i=1}a_{2i,2i+1}a_{2i+1,2i+2}E_{2i,2i+2}=\sum^2_{k=0}D_kJ(BC)^k$$
 where $D_0=-iI_{\infty}, D_1=I_{\infty}$ and $D_2=\sum^{\infty}_{i=1}E_{2i,2i}$ and by Remark \ref{remark1} we conclude that $A$ is coherent. 
 \end{proof}
 And, for the group $UT_{\infty}(m,R)$, we have the following Lemma:
 
 \begin{lemma}
 Let $J\in J_m(\infty,R)$, then there exists a $m$-coherent matrix $A\in UT_{\infty}(m,R)$ such that $J_m(A)=J$ and $A$ is the commutator of 
 
 \begin{itemize}
     \item [1.)] Two involutions in $\pm UT_{\infty}^{(2)}(m,R)$.
     \item[2.)] One skew-involution and one involution both in $\pm UT_{\infty}^{(2)}(m,R)$.
     \item[3.)] Matrices of order $k$ in $UT_{\infty}^{(k)}(m,R)$.
     \item[4.)] One skew finite order $2k$ and one matrix of order finite $2k$ in $\pm UT_{\infty}^{(2k)}(m,R)$. 
 \end{itemize}
\end{lemma}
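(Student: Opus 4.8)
My plan is to reduce the general $m$ to the case $m=1$, already handled in Lemma \ref{lemma1}, by a residue-class decomposition of the $m$th superdiagonal, and then to supply the three remaining factorization types (Parts 1, 3, 4) in the $m=1$ picture by mimicking that construction. For the decomposition, partition the index set $\mathbb{N}$ into the $m$ residue classes $C_r=\{r,r+m,r+2m,\dots\}$, $1\le r\le m$. Since a nonzero entry $a_{i,i+m}$ of $J\in J_m(\infty,R)$ joins $i$ to $i+m$, which lie in the same $C_r$, the matrix $J$ restricts to each class as an ordinary first superdiagonal; after the basis permutation that lists the indices class by class, $J$ becomes a direct sum of $m$ elements of $J_1(\infty,R)$, and the diagonal-plus-$m$th-superdiagonal part of $UT_\infty(m,R)$ becomes $m$ independent copies of the $m=1$ situation. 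Thus it suffices to solve the problem on each class and reassemble; concretely this amounts to the same explicit matrices as in Lemma \ref{lemma1} but with the diagonal made anti-periodic of period $2m$ and the off-diagonal entries placed on the $m$th superdiagonal in alternating blocks of length $m$.

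On a single class I would use, for Part 2, precisely the matrices $B,C$ of Lemma \ref{lemma1}. For the other parts I keep the same alternating-block layout but change the diagonal data: for Part 1 both factors are involutions, with diagonal entries in $\{1,-1\}$ arranged anti-periodically ($d_{i+m}=-d_i$), so that $[B,C]=BCB^{-1}C^{-1}=(BC)^2$; for Part 3 I invoke the order-$k$ construction of Gargate \cite{Gargate} on each class, with diagonal entries $k$th roots of unity; for Part 4 I take $B$ of skew order $2k$ and $C$ of order $2k$, following the same order-$k$ template. In every case the diagonal parts are chosen so that their product is a central scalar and the two families of off-diagonal entries, carried by $B$ and by $C$ on alternating blocks, recombine in the commutator to reproduce the prescribed first superdiagonal, exactly as in the verification $J(A)=J$ of Lemma \ref{lemma1}.

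To reassemble, I note that a direct sum of involutions is an involution, of skew-involutions a skew-involution, of order-$k$ matrices a matrix of order $k$, and likewise for skew order $2k$; moreover each class contributes only diagonal and $m$th-superdiagonal entries with the correct diagonal powers, so the assembled $B$ and $C$ land in the prescribed group $\pm UT_\infty^{(s)}(m,R)$. Coherence transfers as well: $J_m(A)^i$ preserves each class and restricts on $C_r$ to $J(A_r)^i$, while the diagonal coefficients of the per-class coherent expansions assemble into a single sequence $\{D_i\}$ in $D_\infty(Z)$, yielding $A=\sum_i D_i J_m(A)^i$, i.e. $A$ is $m$-coherent; Remark \ref{remark1} then guarantees that the relevant powers remain $m$-coherent.

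The main obstacle is the genuinely new content of Parts 3 and 4 on a single class: choosing roots of unity on the diagonal so that, simultaneously, the order constraint holds ($B^k=I$, respectively $B^{2k}=-I$ and $C^{2k}=I$), the commutator's first superdiagonal equals the prescribed data, and the resulting matrix stays coherent. This is where the invertibility of $k$ and $2k$ and the hypothesis $k\ge 3$ really enter, and where the Gargate construction must be adapted with care; by comparison the residue-class reduction and the final reassembly are routine bookkeeping.
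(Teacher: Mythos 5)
Your argument is correct, and at its core it manufactures the same matrices as the paper: two factors $B,C$ supported on the diagonal and the $m$th superdiagonal, with diagonals anti-periodic in blocks of length $m$ (so that $d_{i+m}=-d_i$ kills the cross terms $DN+ND$), the superdiagonal data of $J$ split between $B$ and $C$ with a scaling $\tfrac12$ (resp.\ $\tfrac1k$, $\tfrac1{2k}$), and $A$ recovered as $-(BC)^2$, $(BC)^2$, $(BC)^k$ or $-(BC)^{2k}$, the last two being products of $2k-3$ and $4k-3$ commutators via Gargate's Lemmas 3.5--3.6 --- a deferral the paper makes in exactly the same way, so your reliance on it is no weaker than the published proof (and note that, like the paper's statement, your phrase ``the commutator'' for Parts 3 and 4 is loose: the output there is a product of several commutators, not one). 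Where you genuinely differ is in organization and in one substantive detail. The paper writes $B$ and $C$ down directly in the $UT_{\infty}(m,R)$ picture and verifies the order relations and $m$-coherence of $BC$ by hand, splitting into the cases $m$ odd and $m$ even; you instead reduce to $m=1$ along the residue classes $C_r=\{r,r+m,r+2m,\dots\}$ and reassemble direct sums, which buys you a uniform treatment with no parity case split and makes coherence transfer transparent (your observation that $J_m(A)^i$ preserves each class and that the per-class diagonal coefficients glue into a single sequence in $D_{\infty}(Z)$ is exactly right, with the same implicit centrality assumption on $i$ and $\omega$ that the paper makes). More importantly, your block-aligned placement of the superdiagonal entries --- alternating blocks of length $m$, matched to the diagonal blocks --- is actually the correct one: the paper places the entries at every other index ($B$ at even rows, $C$ at odd rows), and for \emph{even} $m$ this puts consecutive nonzero entries $n_{i,i+m}$, $n_{i+m,i+2m}$ into the same factor, so $B^2=I+N_B^2$ with $N_B^2\neq 0$ and the paper's $B$ and $C$ fail to be involutions as stated; the extra $\tfrac14$-term in the paper's even-$m$ computation of $BC$ is a symptom of exactly this. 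Your residue-class version restricts on each class to precisely the configuration of Lemma \ref{lemma1}, where the alternation is automatic, so your proof repairs this defect rather than inheriting it; combined with Remark \ref{remark1} for the coherence of powers, the argument is complete at the same level of rigor as the paper's.
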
 
 
 \begin{proof}\ \\ \
 1.)  Consider 
  $$J=\sum^{\infty}_{i=1}a_{i,m+i}E_{i,m+i}$$
 and define $B,C\in \pm UT_{\infty}(m,R)$ as

$$B=\left[\begin{array}{cccccccc}
1 & 0 & \cdots & 0 & 0 & 0 & 0 & \cdots \\
0& 1 & \cdots & 0 & \frac{1}{2}a_{2,m+2} & 0 & 0 & \cdots  \\
0 & 0 & \cdots & 0 & 0 & 0 & 0 & \cdots \\
0 & 0 & \cdots & 0 & 0 & 0 & \frac{1}{2}a_{4,m+4} & \cdots \\
\vdots & & & \vdots & & & &  \\
0 & 0 & \cdots & -1 & 0 & 0 & 0 & \cdots \\
0 & 0 & \cdots & 0 & -1 & 0 & 0 & \cdots \\
0 & 0 & \cdots & 0 & 0 & -1 & 0 & \cdots \\
\vdots & & &  \ddots &  & & & \ddots 
\end{array}\right]$$
where 
$$diag(B)=(\underbrace{1,1,1,\cdots, 1}_{m-times}, \underbrace{-1,-1,\cdots, -1}_{m-times}, 1, 1, \cdots ),$$ and $J_m(B)$ the corresponding matrix with entries in the $m$th super diagonal equal to $(0,\frac{1}{2}a_{2,m+2},0,\frac{1}{2}a_{4,m+4},0,\cdots)_m$ and

$$C=\left[\begin{array}{cccccccc}
1 & 0 & \cdots & \frac{1}{2}a_{1,m+1} & 0 & 0 & 0 & \cdots \\
0& 1 & \cdots & 0 & 0 & 0 & 0 & \cdots  \\
0 & 0 & \cdots & 0 & 0 & \frac{1}{2}a_{3,m+3} & 0 & \cdots \\
0 & 0 & \cdots & 0 & 0 & 0 & 0 & \cdots \\
\vdots & & & \vdots & & & &  \\
0 & 0 & \cdots & -1 & 0 & 0 & 0 & \cdots \\
0 & 0 & \cdots & 0 & -1 & 0 & 0 & \cdots \\
0 & 0 & \cdots & 0 & 0 & -1 & 0 & \cdots \\
\vdots & & &  \ddots &  & & & \ddots 
\end{array}\right]$$
with $diag(C)=diag(B)$ and $J_m(C)$ the corresponding matrix with entries in the $m$th super diagonal equals to $(\frac{1}{2}a_{1,m+1},0,\frac{1}{2}a_{3,m+3},0,\frac{1}{2}a_{5,m+5},0,\cdots)_m$. Is not difficult to proved that $B$ and $C$ are involutions, and $J_m(BCB^{-1}C^{-1})=J$.

In order to proved that $BC$ is $m$-coherent we can observe two case:
\begin{itemize}
    \item If $m$ is an odd number then
$$BC=I_{\infty}+\frac{1}{2}\sum^{\infty}_{i=1}a_{i,i+m}E_{i,i+m},$$
and, in this case we consider $D_0=D_1=I_{\infty}$. 
\item If $m$ an even number we have
$$BC=I_{\infty}+\frac{1}{2}\sum^{\infty}_{i=1}a_{i,i+m}E_{i,i+m}+\frac{1}{4}\sum^{\infty}_{i=1}a_{2i,2i+m}a_{2i+m,2i+2m}E_{2i,2i+2m},$$
and  we consider $D_0=D_1=I_{\infty}$ and $D_2=\sum^{\infty}_{i=1}E_{2i,2i}.$
\end{itemize}
So we proof that $BC$ is $m$-coherent.
\\
\newline
2.) We can consider the same form of matrix $B$ and $C$ but with some different entries, for instance, in $B$ 
$$diag(B)=(\underbrace{-i,-i,\cdots,-i}_{m-times},\underbrace{i,i,\cdots,i}_{m-times},-i,-i,\cdots),$$
where $i$ is the imaginary number and the corresponding matrix $J_m(B)$ with entries in the $m$th super diagonal equals to  $(0,-\frac{1}{2}ia_{23},0,-\frac{1}{2}ia_{45},0,\cdots)_m,$
also
$$diag(C)=(\underbrace{1,1,\cdots,1}_{m-times},\underbrace{-1,-1,\cdots,-1}_{m-times},1,1,\cdots),$$
and the corresponding matrix $J_m(C)$ with entries in the $m$th super diagonal equals to $(\frac{1}{2}a_{12},0,\frac{1}{2}a_{34},0,\frac{1}{2}a_{56},\cdots)_m.$

Observe that $B$ is skew-involution and $C$ is an involution and the proof that $BC$ is $m$-coherent follows similarly from  item (1) above.
\\
\newline
3.) For $k> 2$, consider $\omega$ an $k$th root of unity in $R$. Then we consider the matrix $B$ and $C$ of the item (1) but with diagonal
$$diag(B)=(\underbrace{1,1,\cdots,1}_{m-times},\underbrace{\omega,\omega,\cdots,\omega}_{m-times},1,1,\cdots),$$
and the corresponding $J_m(B)$ with entries in the $m$th super diagonal equals to $(0,\frac{1}{k}a_{2,m+2},0,\frac{1}{k}a_{4,m+4},0,\frac{1}{k}a_{6,m+6},\cdots)_m,$
also

$$diag(C)=(\underbrace{1,1,\cdots,1}_{m-times},\underbrace{\omega^{-1},\omega^{-1},\cdots,\omega^{-1}}_{m-times},1,1,\cdots),$$ and the entries of $J_m(C)$ in the $m$th super diagonal equals to $$(\frac{1}{k}a_{1,m+1},0,\frac{1}{k}a_{3,m+3},0,\frac{1}{k}a_{5,m+5},\cdots)_m.$$

We can observe that $B^k=C^k=I$ and similarly to Lemma 3,5 in Gargate \cite{Gargate} we define $A=(BC)^k$ that is product of $2k-3$ commutators.
\\
\newline
4.) For $k\geq 2$ all solutions of the equation $X^{2k}=1$ are of the form $(i\omega)^j$ where $i$ is the imaginary number, $\omega$ be an $k$th root of unity such that $i,\omega \in R$  and $j=0,1,2,\cdots, 2k-1$.
In this case, we consider the above matrices $B$ and $C$ but with diagonal entries
$$diag(B)=(\underbrace{-i,-i,\cdots,-i}_{m-times},\underbrace{i\omega,i\omega,\cdots,i\omega}_{m-times},-i,-i,\cdots),$$
and the corresponding $J_m(B)$ with entries in the $m$th super diagonal equals to $(0,-\frac{1}{2k}ia_{2,m+2},0,-\frac{1}{2k}ia_{4,m+4},0,\frac{1}{2k}ia_{6,m+6},\cdots)_m,$
also

$$diag(C)=(\underbrace{1,1,\cdots,1}_{m-times},\underbrace{\omega^{-1},\omega^{-1},\cdots,\omega^{-1}}_{m-times},1,1,\cdots),$$
and the corresponding $J_m(C)$ with entries in the $m$th super diagonal equals to $(\frac{1}{2k}a_{1,m+1},0,\frac{1}{2k}a_{3,m+3},0,\frac{1}{2k}a_{5,m+5},\cdots)_m.$

Here we can observe that $B^{2k}=-I$ and $C^{2k}=I$. From the Lemma 3.6 in Gargate \cite{Gargate} we can observe that, if $B^{2k}=-I$ in the proof of this Lemma, we obtain that $(BC)^{2k}=-F_{2k}(B,C)$ where $F_{2k}(B,C)$ is the product of $4k-3$ commutators whose entries are powers of $B$ and $C$. In this case we define $A=-(BC)^{2k}$ and obtain the result.
 \end{proof}

 \begin{lemma}\label{lemaprincipal} Let $A,B$ be $m$-coherent matrices of $UT_{\infty}(m,R)$ such that $J_m(A)=J_m(B)$. Then $A$ and $B$ are conjugated in the group $UT_{\infty}(m,R)$.
\end{lemma}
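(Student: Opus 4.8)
The plan is to produce the conjugating matrix explicitly as a power series in $J:=J_m(A)=J_m(B)$ with central diagonal coefficients, determining its coefficients by a superdiagonal-by-superdiagonal recursion. Write the two $m$-coherent matrices as
$$A=\sum_{i\geq 0}D_iJ^i,\qquad B=\sum_{i\geq 0}D_i'J^i,\qquad D_i,D_i'\in D_\infty(Z).$$
Comparing main diagonals (where only the $i=0$ terms contribute) forces $D_0=D_0'=I_\infty$, and since the only term of either series supported on the $m$th superdiagonal is $D_1J$, respectively $D_1'J$, the hypothesis $J_m(A)=J_m(B)=J$ gives $D_1J=D_1'J=J$. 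Because $D_1$ enters the series only through the product $D_1J$, which is insensitive to the entries of $D_1$ off the support of $J$, I may normalize $D_1=D_1'=I_\infty$ without altering $A$ or $B$.

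Now I look for $P=\sum_{i\geq 0}P_iJ^i$ with $P_0=I_\infty$ and $P_i\in D_\infty(Z)$ solving the equation $PA=BP$. Any such $P$ lies in $UT_\infty(m,R)$ (identity main diagonal, zero on the superdiagonals $1,\dots,m-1$) and is invertible there, so it will witness $PAP^{-1}=B$. Using the commutation rule $JD=S^m(D)J$ of Remark \ref{remark1} (whence $J^iD=S^{im}(D)J^i$), both $PA$ and $BP$ are again power series in $J$ with central diagonal coefficients, and equating the coefficient of $J^s$ yields, for every $s\geq 0$,
$$\sum_{i+j=s}P_iS^{im}(D_j)=\sum_{i+j=s}D_i'S^{im}(P_j).$$
The cases $s=0,1$ hold by the normalizations above. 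For $s\geq 2$ the two extreme terms carrying $P_s$ coincide and cancel, and—precisely because $D_1=D_1'=I_\infty$—the coefficient $P_{s-1}$ survives only through the combination $S^m(P_{s-1})-P_{s-1}$, so the $s$th equation takes the form
$$(I_\infty-S^m)(P_{s-1})=R_{s-1},$$
where $R_{s-1}\in D_\infty(Z)$ depends only on the data $D_i,D_i'$ and on the previously constructed coefficients $P_1,\dots,P_{s-2}$.

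It then remains to solve this difference equation for a diagonal matrix $P_{s-1}$. On diagonal entries it reads $(P_{s-1})_{jj}-(P_{s-1})_{j+m,\,j+m}=(R_{s-1})_{jj}$, which decouples along the arithmetic progressions $j,j+m,j+2m,\dots$ of indices; fixing the first $m$ diagonal entries arbitrarily (say equal to $0$) and propagating forward via $(P_{s-1})_{j+m,\,j+m}=(P_{s-1})_{jj}-(R_{s-1})_{jj}$ produces a solution with central entries. Carrying this out for $s=2,3,4,\dots$ determines $P_1,P_2,P_3,\dots$ successively, which assembles the required $P$ and the conjugacy $PAP^{-1}=B$ in $UT_\infty(m,R)$. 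The step I would check most carefully is the bookkeeping that isolates $P_{s-1}$ through exactly the operator $I_\infty-S^m$ and keeps every coefficient inside $D_\infty(Z)$—this is where the centrality of the coefficients, needed for the commutation rule $JD=S^m(D)J$, is indispensable; granting this, the solvability of the difference equation is automatic from the forward recursion above.
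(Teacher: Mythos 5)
Your proof is correct and takes essentially the same approach as the paper: the paper likewise seeks a conjugator $X=\sum_{k\geq 0}X_kJ^k$ with $X_0=I_\infty$ solving $AX=XB$ (the mirror image of your $PA=BP$), equates coefficients of $J^k$ via the rule $JD=S^m(D)J$, reduces for $k\geq 2$ to exactly the difference equation $S^m(X_k)-X_k=\text{(known terms)}$, and solves it by setting the first $m$ diagonal entries of each $X_k$ to zero and propagating forward. Your extra bookkeeping---justifying the normalization $D_1=D_1'=I_\infty$ and checking that the recursion keeps every coefficient in $D_\infty(Z)$ (the paper takes its $X_k$ only in $D_\infty(R)$, where the commutation rule would need centrality anyway)---merely makes explicit steps the paper leaves implicit.
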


\begin{proof}
Consider $J=J_m(A)=J_m(B)$ and let $(X_n)_{n\geq 0}$ be a sequence of elements of $D_{\infty}(R)$ such that $X_0=I_{\infty}$. Suposse that $X=\sum^{\infty}_{k=0}X_kJ^k\in UT_{\infty}(m,R)$.
Now, choose two normalized sequences $(D_k)_{k\geq 0}$ and $(D^{\prime}_k)_{k \geq 0} $ in $D_{\infty}(R)$ such that 
$A=\sum^{\infty}_{k=0}D_kJ^k$ and $B=\sum^{\infty}_{k=0}D^{\prime}_kJ^k$, and suppose that $AX=XB$, then

$$AX=\sum^{\infty}_{k=0}\left(\sum^k_{i=0}D_iS^{im}(X_{k-i})\right)J^k \  \ and \ \ 
XB=\sum^{\infty}_{k=0}\left(\sum^k_{i=0}X_iS^{im}(D^{\prime}_{k-i})\right)J^k.$$
so to have the result it is enough to prove that $\forall k\geq 2:$

$$\sum^k_{i=0}D_iS^{im}(X_{k-i})=\sum^k_{i=0}X_iS^{im}(D^{\prime}_{k-i}).$$
Check this condition for $k=0,1$. If $k\geq 2$ then we have

$$X_k+S^m(X_{k-1})+\sum^k_{i=2}D_iS^{im}(X_{k-i})=X_k+ X_{k-1}S^{(k-1)m}(D^{\prime}_1)+\sum^{k-2}_{i=0}X_iS^{im}(D^{\prime}_{k-i}) $$
but $D^{\prime}_1=I_{\infty}$, then rewrite the last equality, for all $k\geq 2$
$$S^m(X_k)-X_k=\sum^{k-1}_{i=0}X_iS^{im}(D^{\prime}_{k+1-i})-\sum^{k+1}_{i=2}D_iS^{im}(X_{k+1-i}), $$
and consider the first $m$th super diagonal entries in the each $X_k$ be zero for all $k\geq 1$, then we can define a sequence $(X_k)_{k\geq 0}$ inductively and with this we proof that $AX=XB$. 
\end{proof}
From Lemmas \ref{lemma1} and  \ref{lemaprincipal}, we obtain the follow result.

 \begin{corollary}\label{coro1}  Let $A \in  UT_{\infty}(R)$ whose entries except in the main diagonal and the first super diagonal are all equal to zero. Then $A$ is a commutator of one skew-involution and one involution.
 \end{corollary}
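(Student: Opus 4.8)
The plan is to recognize $A$ as a coherent matrix and then transport the explicit commutator factorization produced in Lemma~\ref{lemma1} along a conjugation supplied by Lemma~\ref{lemaprincipal}. The whole argument is a structural one: no entrywise computation beyond what the two lemmas already provide should be needed.

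First I would observe that, since $A\in UT_{\infty}(R)$ has all entries zero off the main diagonal and the first superdiagonal, it has the form $A=I_{\infty}+J$ with $J=J(A)=\sum_{i\geq 1}a_{i,i+1}E_{i,i+1}\in J_1(\infty,R)$. In particular $A=\sum_{i=0}^{\infty}D_iJ(A)^i$ with $D_0=D_1=I_{\infty}$ and $D_i=0$ for $i\geq 2$, so $A$ is coherent (indeed via a normalized sequence), taking $m=1$ in the definitions.

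Next I would apply Lemma~\ref{lemma1} to this same $J$. It furnishes a coherent matrix $A'\in UT_{\infty}(R)$ with $J(A')=J$ which is a commutator $A'=[B,C]=BCB^{-1}C^{-1}$ of a skew-involution $B$ and an involution $C$ in $\pm UT_{\infty}(R)$. Since $A$ and $A'$ are both coherent and $J(A)=J(A')=J$, Lemma~\ref{lemaprincipal} (in the case $m=1$) yields an element $X\in UT_{\infty}(R)$ with $A=XA'X^{-1}$.

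Finally I would conjugate the factorization:
$$A=X[B,C]X^{-1}=\left(XBX^{-1}\right)\left(XCX^{-1}\right)\left(XBX^{-1}\right)^{-1}\left(XCX^{-1}\right)^{-1}=\left[XBX^{-1},\,XCX^{-1}\right].$$
Because $(XBX^{-1})^2=XB^2X^{-1}=-I_{\infty}$ and $(XCX^{-1})^2=XC^2X^{-1}=I_{\infty}$, the matrix $XBX^{-1}$ is again a skew-involution and $XCX^{-1}$ is again an involution, exactly as recorded in item~(1) of the conjugation remark at the start of this section. Hence $A$ is a commutator of one skew-involution and one involution. The only point requiring care is verifying that $A$ genuinely qualifies as coherent so that Lemma~\ref{lemaprincipal} is applicable; once that is settled the result follows by transporting the skew-involution/involution structure through the conjugation, with nothing further to compute.
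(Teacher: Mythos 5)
Your proposal is correct and follows the paper's own proof exactly: recognize $A$ as coherent, invoke Lemma~\ref{lemma1} to produce a coherent commutator $[B,C]$ of a skew-involution and an involution with the same first superdiagonal, and conclude by Lemma~\ref{lemaprincipal} that $A$ is conjugate to it, conjugation preserving the commutator-of-(skew-)involutions structure. Your explicit verification that $A=I_{\infty}+J(A)$ is coherent via the normalized sequence $D_0=D_1=I_{\infty}$, $D_i=0$ for $i\geq 2$, is a detail the paper leaves implicit, and is a welcome addition.
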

 \begin{proof}
  We know that $A$ is coherent. Consider $J=J_1(A)$ and by Lemma \ref{lemma1} there is $T \in UT_{\infty}(R)$ coherent such that $J=J_1(T)$ and $T=[S,P]$ with $S$ an skew-involution and $T$ and involution respectively. Finally, by the Lemma \ref{lemaprincipal}, we concluded that $A$ e $T$ are conjugated.
   \end{proof}
 And, for the subspace $UT_{\infty}(m,R)$ we have  similar results

\begin{corollary}\label{coro2}
Assume that $R$ is an associative ring with identity $1$ and $2$ (also $k$ or $2k$) is an invertible element of $R$. Then for every $A\in UT_{\infty}(m,R)$ (or $UT_n(m,R)$), whose entries except in the main diagonal and the $m$th super diagonal are equall to zero, is a commutator of 

 \begin{itemize}
     \item [1.)] Two involutions in $\pm UT_{\infty}^{(2)}(m,R)$.
     \item[2.)] One skew-involution and one involution both in $\pm UT_{\infty}^{(2)}(m,R)$.
     \item[3.)] Matrices of order $k$ in $UT_{\infty}^{(k)}(m,R)$.
     \item[4.)] One skew finite order $2k$ and one matrix of order finite $2k$ in $\pm UT_{\infty}^{(2k)}(m,R)$. 
 \end{itemize}

\end{corollary}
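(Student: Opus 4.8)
The plan is to mirror the proof of Corollary~\ref{coro1}, but now invoking the four-part Lemma immediately preceding Lemma~\ref{lemaprincipal} and carrying the argument through uniformly for all four prescribed types of factors. First I would record that a matrix $A\in UT_{\infty}(m,R)$ all of whose entries off the main diagonal and off the $m$th super diagonal vanish has the form $A=I_{\infty}+J$ with $J=J_m(A)\in J_m(\infty,R)$. Thus $A=\sum_{i=0}^{\infty}D_iJ_m(A)^i$ with the normalized sequence $D_0=D_1=I_{\infty}$ and $D_i=0$ for $i\geq 2$, so that $A$ is $m$-coherent.

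Next I would apply the preceding Lemma to $J=J_m(A)$. In the four cases this yields an $m$-coherent matrix $T\in UT_{\infty}(m,R)$ with $J_m(T)=J$ which is, respectively, a commutator of two involutions, a commutator of one skew-involution and one involution, a product of $2k-3$ commutators of matrices of order $k$, and a product of $4k-3$ commutators of one skew finite order $2k$ matrix and one matrix of order $2k$, each factor lying in the prescribed subgroup. Since $A$ and $T$ are then both $m$-coherent with $J_m(A)=J=J_m(T)$, Lemma~\ref{lemaprincipal} produces an element $X\in UT_{\infty}(m,R)$ with $A=XTX^{-1}$. The first Remark of this section guarantees that conjugation sends a product of $r$ commutators of factors of a given type to a product of $r$ commutators of factors of the same type, so $A$ inherits the decomposition of $T$ in every case.

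The step I expect to need the most care is verifying that conjugation by $X\in UT_{\infty}(m,R)$ keeps each factor inside the correct subgroup $\pm UT_{\infty}^{(s)}(m,R)$, and not merely inside $T_{\infty}(R)$ as the general Remark provides. Here I would use that both $X$ and $X^{-1}$ are of the form $I_{\infty}+N$, where $N$ has all its nonzero entries on super diagonals of index $\geq m$, and that any factor $S$ to be conjugated is diagonal plus entries on super diagonals of index $\geq m$ with $S^{s}=\pm I_{\infty}$. Then $(XSX^{-1})^{s}=XS^{s}X^{-1}=\pm I_{\infty}$, conjugation by the unitriangular $X$ leaves the diagonal of $S$ unchanged so the diagonal entries of $XSX^{-1}$ still satisfy $g_{ii}^{s}=\pm 1$, and a shift count shows that no entries can appear on super diagonals $1,\dots,m-1$: each of the three factors of $XSX^{-1}$ contributes a super-diagonal shift that is either $0$ or at least $m$, and the only way three such shifts can sum to a value in $\{1,\dots,m-1\}$ is for all of them to be $0$, which lands on the diagonal. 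Hence $XSX^{-1}\in\pm UT_{\infty}^{(s)}(m,R)$ and its order (or skew order) is preserved, which is exactly what the four cases demand.

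Finally, for the finite groups $UT_n(m,R)$ I would run the same argument with every construction restricted to the top-left $n\times n$ block: the matrices $B,C$ of the preceding Lemma, the matrix $T$, and the conjugator $X$ built by the inductive procedure inside the proof of Lemma~\ref{lemaprincipal} all truncate to $n\times n$ matrices, the induction terminates after finitely many steps because $J$ is nilpotent, and every identity and order condition holds verbatim.
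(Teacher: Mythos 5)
Your proposal is correct and takes essentially the same route as the paper, whose own proof of Corollary~\ref{coro2} is just the one-line remark that it is similar to Corollary~\ref{coro1}: observe that $A=I_{\infty}+J_m(A)$ is $m$-coherent with the normalized sequence $D_0=D_1=I_{\infty}$, $D_i=0$ for $i\geq 2$, take the model matrix $T$ with $J_m(T)=J_m(A)$ from the four-part Lemma, conjugate via Lemma~\ref{lemaprincipal}, and invoke the opening Remark that conjugation preserves decompositions into commutators of each type. Your additional verifications --- that conjugation by $X\in UT_{\infty}(m,R)$ keeps each factor inside $\pm UT_{\infty}^{(s)}(m,R)$ via the diagonal-preservation and shift-count argument, and that the finite case $UT_n(m,R)$ follows by restricting to the top-left $n\times n$ block (legitimate since truncation is multiplicative on upper triangular matrices) --- simply fill in details the paper leaves implicit, and they are sound; your reading of cases 3.) and 4.) as products of $2k-3$ and $4k-3$ commutators also matches the paper's actual constructions and the counts in Theorem~\ref{th1}.
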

\begin{proof}
Similar to the Corollary \ref{coro1} and Corollary 3.7 in \cite{Gargate}.
\end{proof}
 
For the group $UT_{\infty}(m,R)$ we have the following Lemma.

\begin{lemma}\label{matrix} Let $R$ be an associative ring with identity $1$ and let $n\in \mathbb{N}$.
\begin{itemize}
\item [1.)] If $A,B\in UT_{n}(m,R)$ such that $a_{i,m+i}=b_{i,m+i}=1$ for all $1\leq i\leq n-1,$ then $A$ and $B$ are conjugated in $UT_n(m,R)$.
\item[2.)]  If $A,B\in UT_{\infty}(m,R)$ such that $a_{i,m+i}=b_{i,m+i}=1$ for all $1\leq i$, then $A$ and $B$ are conjugated in $UT_{\infty}(m,R)$.
\end{itemize}
\end{lemma}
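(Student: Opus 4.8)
\textbf{The plan} is to prove both statements by constructing an explicit conjugator inside the required group $UT_n(m,R)$ (resp. $UT_\infty(m,R)$), not merely inside the ambient unitriangular group. The key structural fact I would exploit is that in the groups $T_n(m,R)$ and $T_\infty(m,R)$ the superdiagonals below the $m$th one are \emph{forced to vanish}, so that multiplication behaves, modulo higher-order terms, like multiplication in a graded algebra whose pieces are indexed by multiples of $m$. Concretely, since $A,B$ have all entries on superdiagonals $1,\dots,m-1$ equal to zero and their $m$th superdiagonals consist entirely of $1$'s, the product of two such matrices shifts nonzero support from the $tm$th superdiagonal and the $sm$th superdiagonal into the $(t+s)m$th superdiagonal. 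This lets me set up the conjugation equation diagonal block by diagonal block, clearing the discrepancy between $A$ and $B$ one $m$th-multiple superdiagonal at a time.

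First I would reduce statement (2) to statement (1), or prove them uniformly: the infinite case is handled by the same inductive construction as the finite case, reading the equations entry by entry, since the recursion that determines the conjugator on the $(jm)$th superdiagonal involves only finitely many previously determined quantities. So I focus on building $X\in UT_n(m,R)$ (with $X$ having $1$'s on its $m$th superdiagonal and $0$'s on superdiagonals $1,\dots,m-1$) solving $AX=XB$. I would write $X = I + \sum_{j\geq 1} X_{(j)}$ where $X_{(j)}$ collects the entries on the $(jm)$th superdiagonal, and similarly decompose $A = I + \sum_{j\geq 1} A_{(j)}$ and $B = I + \sum_{j\geq 1} B_{(j)}$, where by hypothesis $A_{(1)} = B_{(1)}$ is the all-ones $m$th superdiagonal. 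Comparing the $(tm)$th-superdiagonal component of $AX$ and $XB$ gives, for each $t\geq 2$, an equation of the form
$$
A_{(1)}X_{(t-1)} - X_{(t-1)}B_{(1)} = \Phi_t\bigl(A_{(1)},\dots,A_{(t)},\,B_{(1)},\dots,B_{(t)},\,X_{(1)},\dots,X_{(t-2)}\bigr),
$$
where $\Phi_t$ is a known expression in data already determined. The crucial point is that because $A_{(1)}=B_{(1)}$ has all entries equal to the unit $1$, the operator $Y \mapsto A_{(1)}Y - YB_{(1)}$ is, in the relevant indices, a difference of two shift-like operators whose kernel and cokernel I can control explicitly; solving for the free entries of $X_{(t-1)}$ amounts to solving a triangular linear system over $R$ whose pivots are the invertible entries of $A_{(1)}$.

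The main obstacle, and the place I would concentrate the real work, is verifying that this recursion is genuinely solvable \emph{within} $UT_n(m,R)$ and $UT_\infty(m,R)$, i.e. that I never need to introduce corrections on the forbidden superdiagonals $1,\dots,m-1$ and that the solution stays in the group. This is exactly the gap that a naive ``correct-every-superdiagonal'' argument runs into: here the hypothesis $a_{i,m+i}=b_{i,m+i}=1$ (rather than merely equal) is what makes the shift operator $A_{(1)}$ effectively a clean index shift by $m$, so that at each stage the equation for $X_{(t-1)}$ only ever feeds entries back onto superdiagonals that are multiples of $m$, keeping $X$ in $T_n(m,R)$. I would make this precise by an induction on $t$ showing simultaneously that (i) $\Phi_t$ is supported on the $(tm)$th superdiagonal, and (ii) the system $A_{(1)}X_{(t-1)} - X_{(t-1)}B_{(1)} = \Phi_t$ has a solution $X_{(t-1)}$ supported on the $((t-1)m)$th superdiagonal. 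For the finite case this terminates because $X_{(j)} = 0$ once $jm \geq n$; for the infinite case each entry of $X$ is determined by finitely many steps, so $X$ is a well-defined element of $UT_\infty(m,R)$ and $AX=XB$ holds entrywise. This is morally the same mechanism as in \Cref{lemaprincipal}, specialized to the case where the $m$th superdiagonal is the all-ones vector, and I would either invoke that lemma's construction directly or mirror its inductive definition of the conjugating sequence.
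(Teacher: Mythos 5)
Your argument rests on a false structural premise: that elements of $UT_n(m,R)$ are supported on superdiagonals that are multiples of $m$. Membership in $UT_n(m,R)$ only forces superdiagonals $1,\dots,m-1$ to vanish; the superdiagonals $m+1,m+2,\dots$ are completely arbitrary. So the decomposition $A=I+\sum_{j\geq 1}A_{(j)}$ with $A_{(j)}$ on the $(jm)$th superdiagonal is simply not available, and the ``graded algebra indexed by multiples of $m$'' picture fails. (That support pattern characterizes the $m$-coherent matrices $\sum_i D_i J_m(A)^i$ of Lemma~\ref{lemaprincipal}; Lemma~\ref{matrix} exists precisely to handle matrices that are \emph{not} of that form, so by restricting to this support you have assumed away the case the lemma is for.) The ansatz for the conjugator then breaks at the very first step beyond the $m$th superdiagonal: for $m\geq 2$, if $X$ is supported on superdiagonals $0,m,2m,\dots$, then in $(AX)_{i,i+m+1}=\sum_k a_{ik}x_{k,i+m+1}$ every term vanishes except $a_{i,i+m+1}$ (since $a_{ik}=0$ for $0<k-i<m$, and the entries of $X$ that would enter sit on superdiagonals $1$ and $m+1$, excluded by your ansatz), and likewise $(XB)_{i,i+m+1}=b_{i,i+m+1}$. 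There are \emph{no unknowns left to adjust}, so your recursion is unsolvable unless $a_{i,i+m+1}=b_{i,i+m+1}$ for all $i$, which the hypothesis does not grant. Worse, the unknowns cancel even if you relax the ansatz to an arbitrary $X\in UT_\infty(m,R)$: then $(AX-XB)_{i,i+m+1}=a_{i,i+m+1}-b_{i,i+m+1}$ because the terms $x_{i,i+m+1}$ cancel and the cross terms involve entries of $X$ or $B$ on the first superdiagonal, which are zero in $UT(m,R)$. A concrete counterexample to your construction: $m=2$, $n=4$, $A=I+E_{13}+E_{24}+E_{14}$, $B=I+E_{13}+E_{24}$ satisfy the hypothesis, yet $UT_4(2,R)$ is abelian (any two strictly upper parts supported on positions $(1,3),(2,4),(1,4)$ multiply to zero), so $A$ and $B$ are not conjugate by \emph{any} element of $UT_4(2,R)$, let alone one of your restricted shape.

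The example also shows what the proof actually requires: the conjugator must carry nonzero entries on the ``forbidden'' superdiagonals $1,\dots,m-1$. Indeed, conjugation by $I+yE_{34}\in UT_4(R)$ shifts the $(1,4)$ entry of $A$ by $-y$ and does conjugate $A$ to $B$ above. This is exactly how the paper proceeds: it reduces both $A$ and $B$ to the canonical matrix $J$ with ones on the $m$th superdiagonal and zeros elsewhere above the diagonal, and constructs a \emph{full} unitriangular $X\in UT_n(R)$ --- its displayed $X$ has first-superdiagonal entries $x_{12},\dots,x_{n-1,n}$ --- determined superdiagonal by superdiagonal via $x_{i,i+1}=x_{i+m,i+m+1}+a_{i,i+1+m}$ and, for $j=i+k$ with $k\geq 2$, $x_{i,j}=x_{i+m,j+m}+a_{i,j+m}+\sum_{s=1}^{k}a_{i,j+m-(k-s)}x_{i+m+s,j+m}$. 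Here the all-ones hypothesis on the $m$th superdiagonal is what places the coefficient $1$ on the unknown (your observation on this point is correct, but it buys solvability of the recursion over an arbitrary ring, not confinement of $X$ to multiples-of-$m$ superdiagonals), and in the infinite case each entry is determined after finitely many steps, which is the one part of your plan that survives intact. Note finally that your difficulty is not merely technical: the $4\times 4$ example above shows the conclusion ``conjugated in $UT_n(m,R)$'' is itself too strong as stated, and what the construction (the paper's included) actually proves is conjugacy in $UT_n(R)$, respectively $UT_\infty(R)$.
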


\begin{proof}\ For the case $m=1$ see the Lemma 2.6 in Hou \cite{Hou}. Now consider $m>1$. 
\begin{itemize}
\item[1.)]
Consider $A=(a_{ij})\in UT_{n}(m,R)$ and $$J=\left(\begin{array}{cccccccccc} 
1 & 0 &\cdots&0 & 1&  &   & & \\
& \ddots & &\ddots& &\ddots & && \\ 
&  & 1 & 0&\cdots &0 &1 & &   \\
& &  &\ddots & & \ddots  & &\ddots&\\
 & & & & 1 & 0&\cdots &0 &1  \\ 

\end{array}\right),$$
where all blank entries are equal to $0$. We need only to prove that $A$ is conjugated to the matrix $J$, for this we can constructed a matrix $X=(x_{ij})\in UT_{n}(R)$ such that $X^{-1}AX=J$ or $AX=XJ$. 

Let $X$ be the matrix

$$X=\left(\begin{array}{cccccccccc}
1 & x_{12} &&&\cdots&    &\multicolumn{1}{c|}{x_{1,n-m}}  &  &&\\
 & \ddots &&   & &    & \multicolumn{1}{c|}{\vdots} &  &X'&\\
 &  &1&&\cdots&     &\multicolumn{1}{c|}{x_{m,n-m}}  &  &&\\ \cline{8-10}
 &  &&\ddots&&    & \vdots  &  &&\\
 &  &&& &    & &  &&\\
 &  &&& & 1   &  &  &&\\
 &  && &    && 1 &  &&\vdots\\
 &  &&& &    &  & \ddots &&\\
 &  &&& &    &  &  &1&x_{n-1,n}\\
 &  && &    &  &  &&&1
\end{array}\right)$$
where $X'$ is a matrix of order $m\times m$ with arbitrary entries and the other entries of the matrix $X$ are related as  follows
 
\begin{itemize}
    \item [(a)]  For the first super diagonal entries of
$X$ where $j=i+1$ we choose  
    $$x_{ij}= x_{i+m,j+m}+a_{i,j+m}
    $$
    
    \item[(b)] For $j=i+k$ with $k\geq 2$, the following super diagonal entries are obtained from 
    $$x_{i,j}=x_{i+m,j+m}+a_{i,j+m} +\sum_{s=1}^k a_{i,j+m-(k-s)} \cdot x_{i+m+s,j+m}.   $$
\end{itemize}

\item[2.)]  If $A=(a_{ij})\in UT_{\infty}(m,R)$ then $A$ is conjugated to the matrix $J\in UT_{\infty}(m,R)$ as in $(1)$ and here constructed a matrix $X=(x_{ij})\in UT_{\infty}(m,R)$  such that $AX=XJ$. The relations of $(x_{ij})$ in the super diagonal entries are the same as those given in $(1)$ with the difference that the matrix $X'$ disappears.
\end{itemize}   
Thus the Lemma \ref{matrix} is proved.
\end{proof}

 From Corollary \ref{coro2} and Lemma \ref{matrix} we have
 
\begin{corollary}\label{coro22}
Let $R$ be an associative ring with unity $1$ and that $2$ ($k$ or $2k$ respectively) is an invertible element in $R$. Every matrix in $UT_{\infty}(m,R)$ whose entries in the diagonal and the $m$th super diagonal are all equal to the identity $1$, is an commutator of 
\begin{itemize}
     \item [1.)] Two involutions in $\pm UT_{\infty}^{(2)}(m,R)$.
     \item[2.)] One skew-involution and one involution both in $\pm UT_{\infty}^{(2)}(m,R)$.
     \item[3.)] Two matrices of order $k$ in $UT_{\infty}^{(k)}(m,R)$.
     \item[4.)] One skew finite order $2k$ and one matrix of order finite $2k$ in $\pm UT_{\infty}^{(2k)}(m,R)$. 
 \end{itemize}
'
\end{corollary}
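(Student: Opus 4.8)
The plan is to push the problem down to a single canonical matrix and then transport, by an inner automorphism of $UT_\infty(m,R)$, the commutator decompositions already granted by Corollary \ref{coro2}. First I would fix the matrix $J \in UT_\infty(m,R)$ whose main diagonal and $m$th super diagonal consist entirely of $1$'s and whose remaining super diagonals vanish, i.e.\ the matrix $J$ constructed in the proof of Lemma \ref{matrix}. Since $J$ has nonzero entries only on its main diagonal and its $m$th super diagonal, Corollary \ref{coro2} applies to $J$ verbatim and furnishes, in each of the four cases, factors $P,Q$ lying in the prescribed subgroup --- $\pm UT_\infty^{(2)}(m,R)$ in cases (1) and (2), $UT_\infty^{(k)}(m,R)$ in case (3), and $\pm UT_\infty^{(2k)}(m,R)$ in case (4) --- together with a word expressing $J$ as the relevant (product of) commutator(s) of $P$ and $Q$.

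Next, for an arbitrary $A \in UT_\infty(m,R)$ with all diagonal and all $m$th super diagonal entries equal to $1$, I would invoke Lemma \ref{matrix}(2): because $a_{i,m+i} = j_{i,m+i} = 1$ for every $i$, there is $X \in UT_\infty(m,R)$ with $AX = XJ$, so that $A = XJX^{-1}$. Conjugating the expression for $J$ and using the identity $X[P,Q]X^{-1} = [XPX^{-1},\, XQX^{-1}]$ (and its evident extension to longer commutator words), one writes $A$ with exactly the same number of commutators as $J$, now built from the conjugated factors $XPX^{-1}$ and $XQX^{-1}$. This is precisely the mechanism recorded in the opening Remark of this section (part 2), applied with $\beta = X$.

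The one step that requires genuine verification --- and the place where I expect the real, if modest, work to sit --- is that the conjugated factors still belong to the required subgroups, so that the word for $A$ is honestly of the claimed type rather than merely a word in arbitrary matrices. Here I would observe that $X \in UT_\infty(m,R)$ is unitriangular and differs from $I_\infty$ only from the $m$th super diagonal onward; consequently conjugation by $X$ fixes every diagonal entry and cannot create nonzero entries on the super diagonals $1,\ldots,m-1$, so that $X$ normalizes each subgroup $\pm UT_\infty^{(s)}(m,R)$. In particular, if $P$ is an involution (respectively a skew-involution, a matrix of order $k$, or a matrix of skew order $2k$) in the relevant subgroup, then $(XPX^{-1})^r = X P^r X^{-1}$ shows that $XPX^{-1}$ has the same order type, while its diagonal powers $(XPX^{-1})_{ii}^s = P_{ii}^s = \pm 1$ are unchanged; hence $XPX^{-1}$ lies in the same subgroup as $P$. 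Assembling these three steps establishes the decomposition of $A$ simultaneously in all four cases, and the analogous finite-size statement, if desired, follows identically using Lemma \ref{matrix}(1).
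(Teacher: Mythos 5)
Your proposal is correct and follows exactly the route the paper intends: it reduces to the canonical matrix $J$ via Lemma \ref{matrix}, applies Corollary \ref{coro2} to $J$, and transports the decomposition by conjugation as in part (2) of the opening Remark of Section 3. In fact your write-up is more complete than the paper's own proof (which is only a citation to \cite{Hou} and \cite{Gargate}), since you explicitly verify the one nontrivial point the paper leaves implicit, namely that conjugation by $X \in UT_{\infty}(m,R)$ preserves diagonals and superdiagonal supports and hence normalizes each subgroup $\pm UT_{\infty}^{(s)}(m,R)$.
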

\begin{proof}
Follows similar from Hou in \cite{Hou} and Gargate in \cite{Gargate}.
\end{proof}

 Finally, we  proved the Main Theorem:

 \begin{proof}[Proof of Theorem \ref{th1}]
 Consider $A\in UT_{\infty}(m,R)$ then we can write
 $$A=I_{\infty}+\sum^{\infty}_{i=1}\sum^{\infty}_{j=m+i}a_{i,j}E_{i,j}$$
 and consider 
 $$A_1=I_{\infty}+\sum^{\infty}_{i=1}(a_{i,m+i}-1)E_{i,m+i} \in UT_{\infty}(m,R),$$
then, by corollary \ref{coro1}, $A_1$ is a commutators as desired. Observe that 
  $$A_2=A^{-1}_1 A\in UT_{\infty}(m,R)$$ 
  is an matrix whose entries in the diagonal and the $m$th super diagonal are all equal to $1$, then by Corollary \ref{coro2} $A_2$ is also commutator as desired. So $A$ is product of  commutators according to each case. 
 \end{proof}

\section{Case $R=\mathbb{C}$ the complex field and the Group Vershik-Kerov}

In this section, we consider the case $R=\mathbb{C}$ a complex field  and the group $SL_n(\mathbb{C})$. We proved the Theorem \ref{thequivalent}:

\begin{proof}[Proof of Theorem \ref{thequivalent}]  \

$(\Rightarrow)$ If $A=[S,T]$ with $S$ one involution and $T$ one skew-involution then $A=-(ST)^2$. Consider $B=ST$ then  $B^2=-A$ and $-B^{-1}=TS$. Hence $$B=ST= S(TS)S^{-1}=S(-B^{-1})S^{-1},$$
then, $B$ is similar to $-B^{-1}$.

$(\Leftarrow)$ By hypothesis, if $B\in SL_n(\mathbb{C})$ is such that $A=-B^2$ and $B$ is similar to $-B^{-1}$ then, by the Theorem 5 in \cite{Paras}, $B$ is product of one involution $S$ and one skew-involution $T$. Then we have
$$A=-B^{2}=-(ST)^2=-STST=-ST{S^{-1}}(-T^{-1})=STS^{-1}T^{-1}=[S,T].$$
\end{proof}
An immediate consequence  is the following corollary:

\begin{corollary}\label{corolinvol}Let $A\in SL_n(\mathbb{C})$. Then 
\begin{itemize}
    \item [1.)] $A$ is a product of commutators of involutions if there is a matrix $B\in SL_n(\mathbb{C})$ such that $A=B^2$ and $B$ is product of two involutions.
    \item[2.)] $A$ is a product of commutators of one skew-involution and one involution if there is a matrix $B\in SL_n(\mathbb{C})$ such that $A=-B^2$ and $B$ is a product of one skew-involutions and one involution.
 \end{itemize}
\end{corollary}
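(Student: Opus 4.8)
The plan is to observe that both statements reduce to a single algebraic identity relating a commutator of elements that are self-inverse up to sign to the square of their product, so that in each case $A$ turns out to be a \emph{single} commutator of the required type; the word ``product'' in the statement is then witnessed by a one-term product, and no genuine decomposition argument is needed.

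First I would prove part (1). By hypothesis $A = B^2$ with $B = S_1 S_2$, where $S_1, S_2$ are involutions, so $S_1^{-1} = S_1$ and $S_2^{-1} = S_2$. Then
$$[S_1, S_2] = S_1 S_2 S_1^{-1} S_2^{-1} = S_1 S_2 S_1 S_2 = (S_1 S_2)^2 = B^2 = A,$$
so $A$ is itself a commutator of two involutions, and the claim follows directly.

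For part (2), I would write $B = S T$ with $S$ an involution ($S^{-1} = S$) and $T$ a skew-involution ($T^2 = -I$, hence $T^{-1} = -T$); the opposite assignment is symmetric. Following the computation already carried out in the proof of Theorem \ref{thequivalent},
$$[S, T] = S T S^{-1} T^{-1} = S T \cdot S \cdot (-T) = -(ST)^2 = -B^2 = A,$$
so $A$ is a commutator of one involution and one skew-involution. If instead $S$ is the skew-involution ($S^{-1}=-S$) and $T$ the involution ($T^{-1}=T$), the same computation gives $[S,T] = -(ST)^2 = A$ again.

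The only point requiring care — and the sole place where the two parts genuinely differ — is the sign bookkeeping coming from the relation $X^{-1} = -X$ for a skew-involution versus $X^{-1} = X$ for an involution; it is exactly this sign that converts the minus sign in $A = -B^2$ into a bona fide commutator. I would therefore present both parts as immediate consequences of the identities above, emphasizing that part (2) is essentially the $(\Leftarrow)$ computation of Theorem \ref{thequivalent}, now applied directly from the assumed factorization of $B$ without first passing through the similarity criterion of \cite{Paras}.
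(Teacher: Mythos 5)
Your proposal is correct and matches the paper's own argument: the paper proves this corollary by pointing directly at the $(\Leftarrow)$ computation in the proof of Theorem \ref{thequivalent}, which is precisely your identity $[S,T]=-(ST)^2=-B^2=A$, with the obvious two-involution analogue $[S_1,S_2]=(S_1S_2)^2=B^2=A$ for part (1). As you observe, since the factorization of $B$ is assumed in the hypothesis, the appeal to the similarity criterion of \cite{Paras} used in Theorem \ref{thequivalent} is not needed, and each $A$ is realized as a single commutator of the required type.
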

\begin{proof}
Follows immediately from proof of Theorem \ref{thequivalent}.
\end{proof}
Next, we proved the Theorem \ref{casek}.

\begin{proof}[Proof of Theorem \ref{casek}] In this case, we can rewrite $A=B^k=(CD)^k$ with $C^k=D^k=I$ and the result follows from Lemma 3.6 in Gargate \cite{Gargate}.
\end{proof}

And, in the general case we have as a result the Theorem \ref{thskeworderk}:

\begin{proof} [Proof of Theorem \ref{thskeworderk}]
Follows immediately from Lemma 3.6 in Gargate \cite{Gargate}.
\end{proof}

 Then, in $SL_n(\mathbb{C})$ we proved the Theorem \ref{th4}:

\begin{proof}[Proof of Theorem \ref{th4}]
Consider $A\in SL_n(\mathbb{\mathbb{C}}) $ not a scalar matrix, then by Theorem 1 in \cite{Sourour}, we can find a lower-triangular matrix $L$ and a upper-triangular matrix $U$ such that $A$ is similar to $LU$, and both $L$ and $U$ are unipotent. By the Theorem \ref{th1} it follows that each one of the matrices $L$ and $U$ is a product of commutators as desired. 
 For the scalar case $A=\alpha I$ with $det(A)=1$ it suffices to consider the case when $n$ is exactly the order of $\alpha$ (see \cite{Hou}).
 We using the techniques of \cite{Grunenfelder} for the proof in each case. Observe that, if $n$ is even we have

$$\alpha I=\left[\begin{array}{cccccc}
\alpha &             & & & & \\
       & \alpha^{-1} & & & & \\
  & & \alpha^3 & &  & \\
  & & &  \ddots & & \\
   & & &         &  \alpha^{n-1} & \\
    & & &        &  & \alpha^{-n+1}\end{array}\right] \left[\begin{array}{cccccc}
1 &             & & & & \\
       & \alpha^{2} & & & & \\
  & & \alpha^{-2} & &  & \\
  & & &  \ddots & & \\
   & & &         &  \alpha^{-n+2} & \\
    & & &        &  & 1\end{array}\right],$$
and if $n$ is odd
 $$\alpha I=\left[\begin{array}{cccccc}
\alpha &             & & & & \\
       & \alpha^{-1} & & & & \\
  & & \alpha^3 & &  & \\
  & & &  \ddots & & \\
   & & &         &  \alpha^{-n+2} & \\
    & & &        &  & 1\end{array}\right] \left[\begin{array}{cccccc}
1 &             & & & & \\
       & \alpha^{2} & & & & \\
  & & \alpha^{-2} & &  & \\
  & & &  \ddots & & \\
   & & &         &  \alpha^{n-1} & \\
    & & &        &  & \alpha^{-n+1}\end{array}\right].$$

Thus, if $n$ is even then denote by $$S=diag(\alpha,\alpha^{-1},\alpha^3,\cdots,\alpha^{n-1},\alpha^{-n+1})\ \  \text{and}\ \ T=diag(1,\alpha^2,\alpha^{-2},\cdots, \alpha^{-n+2},1),$$
so $\alpha I=ST.$ The objective is analyse, in each case, the decomposition in products of the matrix 
 $$\left[\begin{array}{cc}a & 0 \\ 0 & a^{-1}\end{array}\right]$$
with $a\in \mathbb{C}$, $a\notin \{0,1,-1\}$.
\\
\newline
1.) For involutions, it's enough to find two matrices $B$ and $C$ such that $\alpha I=B^2C^2$ and both are products of involutions. For this case consider $$B=diag(\alpha^{1/2},\alpha^{-1/2},\alpha^{3/2},\cdots, \alpha^{(-n+2)/2}, 1),$$
and observe that $B^2=S$. Here
$$\left[\begin{array}{cc}a & 0 \\ 0 & a^{-1}\end{array}\right]=J_1(a)\cdot J_2(a),$$
with $J_1(a)=\left[\begin{array}{cc}0 & ay \\ \displaystyle\frac{1}{ay} & 0\end{array}\right]$ and $J_2(a)=\left[\begin{array}{cc}0 & y \\ \displaystyle\frac{1}{y} & 0\end{array}\right]$,  $y \in \mathbb{C}\setminus\{0\}$ and both are involutions. Then it follows that $B$ is product of two involutions, so $S$ is a commutator of these  two involutions. Similarly we can proof the same result for $T$. Observe that for the case $n$ odd number the proof is similar, so we can conclude that $\alpha I$ is a product of two commutators of involutions. 
For other proof of this case see Hou in \cite{Hou}.
\\
\newline
2.) For this case we can see the following decomposition, if $n$ is even
$$\alpha I=  diag(-\alpha,-\alpha^{-1},-\alpha^3,\cdots,-\alpha^{n-1},-\alpha^{-n+1})\times $$ $$ \ \ \ \ \  diag(-1,-\alpha^2,-\alpha^{-2},\cdots,-\alpha^{-n+2},-1)$$
and if $n$ is odd
$$\alpha I= - diag(-\alpha,-\alpha^{-1},-\alpha^3,\cdots,-\alpha^{-n+2},-1)\times $$ $$ \ \ \ \ \ \ \ diag(-1,-\alpha^2,-\alpha^{-2},\cdots,-\alpha^{n-1},-\alpha^{-n+1}).$$

For $n$ even, consider $S=diag(-\alpha,-\alpha^{-1},-\alpha^3,\cdots,-\alpha^{-n+2},-1)=-B^2$ with $B=diag(i\alpha^{1/2},i\alpha^{-1/2},i\alpha^{3/2},\cdots,i\alpha^{(-n+2)/2},i)$.
And, observe that, for all $a\in \mathbb{C}$ $a\neq 0$,
$$\left[\begin{array}{cc}ia & 0 \\ 0 & ia^{-1}\end{array}\right]=(iJ_1(a))\cdot J_2(a)$$
with $J_1(a)$ and $J_2(a)$ the same matrices in the item (1). Observe that, in this case $i J_1(a)$ is an skew-involution and $J_2(a)$ is an involution. So, $B$ is the product of one skew-involution and one involution, then by the Corollary  \ref{corolinvol} we have that $S$ is a commutator of one skew-involution and one involution. Follows similarly for $T$ and so we can conclude that $\alpha I$ is a product of two commutators of skew-involutions and involutions. For the case $n$ odd number follows immediately.
\\
\newline
3.) Observe that for $a\in \mathbb{C}$, $a\notin \{0,1,-1\}$ then
 $$\left[\begin{array}{cc}a & 0 \\ 0 & a^{-1}\end{array}\right]= J_1(a)\cdot J_2(a)$$
 with
 $$J_1(a)=\left[\begin{array}{cc}\displaystyle\frac{at}{a+1} & a-\left(\displaystyle\frac{at}{a+1}\right)^2 \\
 & \\ -\displaystyle\frac{1}{a} & \displaystyle\frac{t}{a+1}\end{array}\right] \ and \  J_2(a)=\left[\begin{array}{cc}\displaystyle\frac{at}{a+1} & a\left(\displaystyle\frac{at}{a+1}\right)^2-1 \\ 
 &\\ 1 & \displaystyle\frac{t}{a+1}\end{array}\right],$$
where $t=\theta+\theta^{-1}$, $\theta^k=1$, $\theta\neq 1$. Observe that $J_1(a)^k=J_2(a)^k=I$ (see \cite{Grunenfelder}).

Then, if $n$ is even we have 

$$\alpha I=\left[\begin{array}{cccccc}
\alpha &             & & & & \\
       & \alpha^{-1} & & & & \\
  & & \alpha^3 & &  & \\
  & & &  \ddots & & \\
   & & &         &  \alpha^{n-1} & \\
    & & &        &  & \alpha^{-n+1}\end{array}\right] \left[\begin{array}{cccccc}
1 &             & & & & \\
       & \alpha^{2} & & & & \\
  & & \alpha^{-2} & &  & \\
  & & &  \ddots & & \\
   & & &         &  \alpha^{-n+2} & \\
    & & &        &  & 1\end{array}\right],$$
and if $n$ is odd
 
 $$\alpha I=\left[\begin{array}{cccccc}
\alpha &             & & & & \\
       & \alpha^{-1} & & & & \\
  & & \alpha^3 & &  & \\
  & & &  \ddots & & \\
   & & &         &  \alpha^{-n+2} & \\
    & & &        &  & 1\end{array}\right] \left[\begin{array}{cccccc}
1 &             & & & & \\
       & \alpha^{2} & & & & \\
  & & \alpha^{-2} & &  & \\
  & & &  \ddots & & \\
   & & &         &  \alpha^{n-1} & \\
    & & &        &  & \alpha^{-n+1}\end{array}\right].$$
 
 Denote by $F$ and $G$ the first and second matrix that appears in these decompositions. If $n$ is even, for instance, $F=diag(\alpha,\alpha^{-1},\alpha^3,\cdots, \alpha^{n-1},\alpha^{-n+1})$ and let $S=diag(\alpha^{1/k},\alpha^{-1/k},\alpha^{3/k},\cdots, \alpha^{(n-1)/k},\alpha^{(-n+1)/k}).$ Then, each block $2\times 2$ of the form $diag(\alpha^j,\alpha^j)$ is product of $J_1(\alpha^j)$ and $J_2(\alpha^j)$, both of order $k$, so $F=S^k=(J_1\cdot J_2)^k$ with $J_1$ and $J_2$ the respective block matrices. By the Theorem \ref{casek} we have that $F$ is product of $2k-3$ commutators of $J_1$ and $J_2$. Similarly we can obtain the same results for the others matrices. Therefore, by the decomposition, we conclude that $\alpha I_n$ is a product of $4k-6$ commutators. The case $n$ odd number follows immediately.
\\
\newline
4.) For $n$ even, consider
$$\alpha I=-diag(-\alpha,-\alpha^{-1},-\alpha^3,\cdots, - \alpha^{n-1},-\alpha^{-n+1})\times$$ $$ \ \ \ \ \ \ \  \times diag(-1,-\alpha^2,-\alpha^{-2},\cdots,-\alpha^{-n+2},-1), $$
and $S=diag(-\alpha,-\alpha^{-1},-\alpha^3,\cdots, - \alpha^{n-1},-\alpha^{-n+1})=-B^{2k}$ with  $$B=diag(i^{1/k}\alpha^{1/(2k)},i^{1/k}\alpha^{-1/(2k)},\cdots,i^{1/k}\alpha^{(-n+1)/(2k)}),$$ 
where $i$ is the imaginary number.

Observe that
$$\left[\begin{array}{cc}i^{1/k}a & 0 \\ 0 & i^{1/k}a^{-1}\end{array}\right]=(i^{1/k}J_1(a))\cdot J_2(a),$$
and $i^{1/k}J_1(a)$ is a skew order $2k$ matrix. By Theorem \ref{thskeworderk} we conclude that $S$ is a product of $4k-3$ commutators. By the same observations in the above items we conclude the proof. 
\end{proof}
Let $n$ a positive integer  and consider $GL_n(\mathbb{C})$ the general linear group over $\mathbb{C}$. The Vershik-Kerov group  $GL_{VK}(\infty,\mathbb{C})$ is the group consisting of all infinite matrices of the form
\begin{equation}\label{eq111}
\left(\begin{array}{c|c}M_1 & M_2 \\ \hline 0 & M_3 \end{array}\right)
\end{equation}
    where $M_1\in GL_n(\mathbb{C})$ and $M_3\in T_{\infty}(\mathbb{C}).$ 
Also, denote by $GL_{VK}(\infty,m,R)$ the subgroup of $GL_{VK}(\infty,\mathbb{C})$ such that $M_3 \in T_{\infty}(m,R)$ and denote by $SL_{VK}(\infty,m,R)$ the subgroup of $GL_{VK}(\infty,m,R)$ such that $M_1\in SL_n(\mathbb{C})$ and $M_3 \in UT_{\infty}(m,R)$.

We  use the following lemma proof in \cite{Hou}:

\begin{lemma}
Assume that $\mathbb{C}$ is a complex field or the real number field. Let $A\in GL_n(\mathbb{C})$ of which $1$ is no eigenvalue, and let $T$ be an infinite unitriangular matrix. In the Vershik-Kerov group, any matrix of the form 
\begin{equation}\label{eq112}
\left(\begin{array}{c|c}A & B \\ \hline 0 & T \end{array}\right)
\end{equation} is conjugated to \begin{equation}\label{eq113}
\left(\begin{array}{c|c}A & 0 \\ \hline 0 & T \end{array}\right)
\end{equation}
\end{lemma}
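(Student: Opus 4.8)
The plan is to exhibit the conjugating element explicitly inside the Vershik-Kerov group, taking it of block-unipotent form
\[
P=\left(\begin{array}{c|c} I_n & X \\ \hline 0 & I_{\infty}\end{array}\right),
\]
where $X$ is an $n\times\infty$ matrix to be determined. Such a $P$ lies in $GL_{VK}(\infty,\mathbb{C})$, since its diagonal blocks are $I_n\in GL_n(\mathbb{C})$ and $I_{\infty}\in T_{\infty}(\mathbb{C})$ while the upper-right block is unrestricted; its inverse $P^{-1}$ is obtained by replacing $X$ with $-X$.

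First I would compute the conjugate by a direct block multiplication, obtaining
\[
P\left(\begin{array}{c|c} A & B \\ \hline 0 & T\end{array}\right)P^{-1}=\left(\begin{array}{c|c} A & B+XT-AX \\ \hline 0 & T\end{array}\right).
\]
Thus the two diagonal blocks $A$ and $T$ are preserved, and the problem reduces to solving the Sylvester-type matrix equation
\[
AX-XT=B
\]
for the unknown $X$: once its upper-right block vanishes, the conjugate is exactly the block-diagonal matrix \eqref{eq113}.

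Next I would solve this equation column by column, which is where the hypotheses enter. Writing $X=(x_1,x_2,x_3,\dots)$ and $B=(b_1,b_2,b_3,\dots)$ with $x_j,b_j\in\mathbb{C}^n$, and using that $T$ is upper unitriangular (so $T_{jj}=1$ and $T_{ij}=0$ for $i>j$), the $j$-th column of $AX-XT=B$ reads
\[
(A-I_n)\,x_j=b_j+\sum_{i<j}T_{ij}\,x_i.
\]
Since $1$ is not an eigenvalue of $A$, the matrix $A-I_n$ is invertible, so
\[
x_j=(A-I_n)^{-1}\Bigl(b_j+\sum_{i<j}T_{ij}\,x_i\Bigr)
\]
determines $x_j$ recursively from $x_1,\dots,x_{j-1}$, producing a unique $X$.

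The main point to check — and the only place where the infinite size of $T$ could cause trouble — is that this recursion genuinely defines each column by a \emph{finite} computation; this is guaranteed by the strict upper-triangularity of $T$ below the diagonal, so that the sum $\sum_{i<j}T_{ij}x_i$ has only finitely many terms and no convergence issue arises despite $X$ having infinitely many columns. With such an $X$ in hand, $P\in GL_{VK}(\infty,\mathbb{C})$ conjugates the matrix \eqref{eq112} to the matrix \eqref{eq113}, which completes the proof.
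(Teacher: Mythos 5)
Your proposal is correct, and it coincides with the standard argument: the paper itself states this lemma without proof, importing it from Hou \cite{Hou}, where the proof is exactly your reduction --- conjugate by a block-unipotent matrix $\left(\begin{smallmatrix} I_n & X \\ 0 & I_{\infty}\end{smallmatrix}\right)$ and solve the Sylvester equation $AX - XT = B$ column by column, using that $A - I_n$ is invertible because $1$ is not an eigenvalue of $A$ and that each column of the unitriangular $T$ has only finitely many nonzero entries. Your explicit remark that the recursion is finitary at each step, so no convergence question arises for the infinite matrix $X$, is precisely the point that makes the argument go through in the Vershik--Kerov setting.
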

 
Then, in this case we shown the Theorem \ref{th3}

\begin{proof}[Proof of Theorem \ref{th3}]  Consider $M\in SL_{VK}(m,\mathbb{C})$ in the form $M=\left(\begin{array}{c|c} M_1 & M_2  \\ \hline 0 & M_3\end{array}\right)$, with $M_1\in SL_n(\mathbb{C})$ and $M_3\in UT_{\infty}(m,\mathbb{C})$. From the proof of Theorem 1.3 in \cite{Hou}, $M$ is conjugated to an infinite matrix of the form $\left(\begin{array}{c|c} A & 0  \\ \hline 0 & T\end{array}\right)$, with $A\in SL_n(\mathbb{C})$ for which $1$ is no eigenvalue and $T \in UT_{\infty}(m,\mathbb{C})$. By the Theorem \ref{th1} and the Theorem \ref{th4}, both are products of  commutators  and we know that the direct sum of $A$ and $T$ is also a product of commutators of elements as desired.
\end{proof}

\end{document}